\newtheorem{Theorem}{Theorem}
\newtheorem{Proposition}{Proposition}
\theoremstyle{definition}
\newtheorem{definition}{Definition}
\def\S{{\mathbf S}}
\begin{document}

\title{Cyclic branched covers of alternating knots}

\author{
Luisa Paoluzzi
}
\date{\today}
\maketitle

\begin{abstract}

\vskip 2mm

For any integer $n>2$, the $n$-fold cyclic branched cover $M$ of an alternating
prime knot $K$ in the $3$-sphere determines $K$, meaning that if $K'$ is a knot 
in the $3$-sphere that is not equivalent to $K$ then its $n$-fold cyclic 
branched cover cannot be homeomorphic to $M$.

\vskip 2mm

\noindent\emph{MSC 2020:} Primary 57K10; Secondary 57M12; 57K32; 57K35.

\vskip 2mm

\noindent\emph{Keywords:} Alternating knots, prime knots, cyclic branched 
covers of knots, periodic symmetries of knots.

\end{abstract}

\section{Introduction}

A knot $K$ in the $3$-sphere is \emph{alternating} if it admits a generic 
projection onto a $2$-sphere where the double points of the projection 
alternate between overcrossings and undercrossings when travelling along the 
knot. In spite of the purely combinatorial character of this definition, being 
alternating seems to have deep consequences on the topological and geometric 
properties of the knot. For instance, alternating prime knots cannot be 
satellite knots \cite{Me}, that is their exteriors are atoroidal. Nonetheless, 
until fairly recently no description of this class of knots in terms of 
geometric or topological propertiess was known. A characterisation of 
alternating knots as the class of knots admitting spanning surfaces with 
special features was provided independently by Greene \cite{G2} and Howie 
\cite{H}. 

In this work, we are interested in studying another topological aspect of
alternating knots, namely the behaviour of their cyclic brached covers. Recall
that given a knot $K$ in the $3$-sphere and an integer $n\ge 2$ one can
construct a closed $3$-manifold $M(K,n)$ called (the total space of) the 
\emph{$n$-fold cyclic branched cover of $K$}. We refer the reader to Rolfsen's
book \cite{R} for the explicit construction of these manifolds since it will 
not be needed here (see also \cite{P2} for a survey on cyclic branched covers). 
In the following we will only need the following fact: there is a map 
$p:M(K,n)\longrightarrow \S^3$ whose restriction 
$M(K,n)\setminus p^{-1}(K)\longrightarrow \S^3\setminus K$ is a covering map. 
We point out that the manifolds $M(K,n)$ can be considered as topological 
invariants of the knot $K$. It was shown by Kojima in \cite{K} that its 
$n$-fold cyclic branched cover $M(K,n)$ \emph{determines} the prime knot $K$ 
provided that $n$ is sufficiently large in the following sense: for each pair 
of prime knots $K$ and $K'$, there exists an integer $N=N(K,K')$ such that if 
the manifolds $M(K,n)$ and $M(K',n)$ are homeomorphic for some $n\ge N$, then 
$K$ and $K'$ are necessarily equivalent. 

If the $n$-fold cyclic branched cover of a knot $K$ does not determine $K$,
that is if there exists another knot $K'$ not equivalent to $K$ such that 
$M(K,n)$ and $M(K',n)$ are homeomorphic, then we say that $K$ and $K'$ are
\emph{$n$-twins}.

Kojima's result can thus be restated by saying that a prime knot does not have
$n$-twins if $n$ is large enough. However, it should be stressed that for every
integer $n$ there are prime knots, and even hyperbolic knots, that have
$n$-twins (see, for instance, \cite{N,S,Z}). 

The main result of the paper asserts that cyclic branched covers are strong
invariants for alternating prime knots.

\begin{Theorem}\label{t:main}
Let $K$ be an alternating prime knot. If $n>2$, then $K$ has no $n$-twins.
\end{Theorem}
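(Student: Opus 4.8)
\medskip
\noindent\textbf{Proof strategy.}
The plan is to suppose that $K'$ is an $n$-twin of $K$ with $n\ge 3$, to put $M:=M(K,n)\cong M(K',n)$, and to exploit the two cyclic group actions that this gives on $M$. Let $C=\langle\psi\rangle$ and $C'=\langle\psi'\rangle$ be the groups of deck transformations of $p\colon M\to\S^3$ (branched over $K$) and of the corresponding $p'\colon M\to\S^3$ (branched over $K'$); both are cyclic of order $n$, and the quotient orbifolds $\mathcal O=M/C$ and $\mathcal O'=M/C'$ have underlying space $\S^3$, with singular locus $K$, respectively $K'$, and cone angle $2\pi/n$. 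It suffices to find an orientation-preserving self-homeomorphism of $M$ conjugating $C$ to $C'$, equivalently to identify $\mathcal O$ with $\mathcal O'$ as orbifolds, as this forces $K\cong K'$. By Menasco's theorem \cite{Me} the exterior of $K$ is atoroidal, so by geometrisation $K$ is a torus knot or a hyperbolic knot; among torus knots only the $T(2,q)$ are alternating, which gives two cases.

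\emph{The Seifert case.} If $K=T(2,q)$, then $M$ is a Brieskorn manifold, carrying one of the geometries $\S^3$, $\mathrm{Nil}$ or $\widetilde{\mathrm{SL}_2}$. Here I would invoke the classification of periodic diffeomorphisms of such manifolds: each is conjugate to an isometry, the isometric cyclic actions with connected branch locus form a short explicit list, and a computation with the Seifert invariants leaves $K'\cong T(2,q)$ as the only possibility --- the finitely many sporadic spaces, such as the Poincaré homology sphere, being handled directly through their finite isometry groups. That $K'$ cannot be a satellite knot follows, as usual, from the fact that the cyclic branched covers of satellite knots are toroidal or reducible, whereas $M$ is a small Seifert manifold.

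\emph{The hyperbolic case.} Assume now that $K$ is hyperbolic. Using Thurston's orbifold theorem, together with the primeness of $K$ to exclude essential spherical $2$-suborbifolds and the atoroidality of its exterior to exclude essential Euclidean ones, the orbifold $\mathcal O=\mathcal O(K,n)$ is geometric; since $K$ is not a torus knot, $\mathcal O$ is hyperbolic, apart from the isolated Euclidean instances (such as the figure-eight knot with $n=3$), in which $M$ is a flat manifold and one argues with the rigidity of flat manifolds instead. So $M$ is a closed hyperbolic $3$-manifold; then $\mathrm{Isom}(M)$ is finite and, by Mostow--Prasad rigidity, the finite groups $C$ and $C'$ can be conjugated into $G:=\mathrm{Isom}^+(M)$. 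Their fixed-point sets $\gamma=\mathrm{Fix}(C)$ and $\gamma'=\mathrm{Fix}(C')$ are closed geodesics, and since an order-$n$ rotation about a geodesic is uniquely determined by that geodesic, any isometry of $M$ carrying $\gamma$ to $\gamma'$ conjugates $C$ to $C'$. Everything thus reduces to the following.

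\begin{Claim}
The geodesics $\gamma$ and $\gamma'$ lie in a single orbit of $\mathrm{Isom}(M)$.
\end{Claim}

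This is \textbf{the main obstacle}, and the point at which alternation becomes indispensable: for a general hyperbolic knot the claim fails, as hyperbolic $n$-twins exist for every $n$. To establish it I would study the finite group $H:=\langle C,C'\rangle\le G$ and the quotient orbifold $M/H$, which is covered by both $\mathcal O$ and $\mathcal O'$, and appeal to a classification of the ``decorated'' triples $(H;C,C')$ that can occur (in the spirit of work of Reni, Zimmermann, Mecchia and the author): if $\gamma$ and $\gamma'$ were in distinct orbits, $(H;C,C')$ would belong to a short list, and in each case $K$ would carry a symmetry of a restricted type --- a free or semi-free period, or an involution with a prescribed quotient --- whose fixed-point or quotient data again describe a knot or link in $\S^3$. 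By the solution of the flyping conjecture (Menasco--Thistlethwaite), every symmetry of the prime alternating knot $K$ is induced by a symmetry of a reduced alternating diagram; feeding this ``visibility'' into the branched-cover correspondence and combining it with the definiteness characterisation of alternating knots of Greene \cite{G2} and Howie \cite{H} --- the two spanning surfaces with definite Gordon--Litherland forms, hence the positive- and negative-definite fillings of the double branched cover --- one checks that none of the exotic configurations survives unless $\gamma'$ is already an isometric copy of $\gamma$, i.e.\ $K'\cong K$. Carrying out this final elimination, discarding the exotic triples $(H;C,C')$ under the alternating hypothesis, is the technical core of the proof.
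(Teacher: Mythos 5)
Your reduction of the hyperbolic case to conjugating the two cyclic groups $C,C'$ inside $\mathrm{Isom}(M)$ is fine, and your Seifert discussion is in the same spirit as the paper's (which carries out the computation explicitly via N\'u\~nez--Ram\'{\i}rez-Losada's list of Seifert invariants of cyclic branched covers of torus knots). But the heart of the theorem is exactly the step you label ``the technical core'' and then do not carry out. Classifying the triples $(H;C,C')$ would essentially re-derive Zimmermann's theorem, which for $n>2$ already tells you precisely what the obstruction is: $K$ would have to admit an $n$-period $\psi$ with $K/\langle\psi\rangle$ the trivial knot such that \emph{no} homeomorphism of $\S^3$ exchanges the two components of $(K\cup \mathrm{Fix}(\psi))/\langle\psi\rangle$. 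So the actual content to be proved is a concrete statement about this quotient link, and your sketch never identifies it, let alone proves it. The appeal to Greene--Howie definite surfaces and to ``feeding visibility into the branched-cover correspondence'' is not an argument: it is not explained how definiteness of the Gordon--Litherland pairing bears on the existence of a component-exchanging symmetry of the quotient link, and no elimination of the ``exotic configurations'' is actually performed. A second, smaller issue: the visibility of periods on a minimal alternating diagram in the precise sense needed is not an immediate consequence of the flyping conjecture as you assert; the paper is careful to point out that a detailed proof (Costa--Quach Hongler, and Boyle for odd prime order) was only given recently.

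For comparison, the paper closes the gap as follows: citing Zimmermann, it takes the period $\psi$ with trivial quotient, uses visibility on a minimal alternating diagram, and then proves two diagrammatic propositions. First, since the quotient diagram is an alternating diagram of the trivial knot (hence, by Bankwitz, must admit a reducing Reidemeister I move, which can only involve the axis), an induction on the size of the quotient tangle pins down the quotient diagram to a very explicit form: a chain of twist boxes closed around the axis. Second, starting from that explicit form one constructs, by flypes and parity adjustments of the twist boxes, a homeomorphism of $\S^3$ exchanging the two components of $(K\cup \mathrm{Fix}(\psi))/\langle\psi\rangle$, contradicting Zimmermann's criterion. Without an argument of this kind (or some genuine substitute for it), your proposal remains an outline whose decisive step is missing.
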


Before introducing the key ideas of the proof, it is worth mentioning what
happens in the situations where the hypothesis of the theorem are not
fulfilled. First of all, this type of result cannot hold for composite knots.
Indeed, as shown by Viro \cite{V}, it is easy to construct non equivalent 
composite knots, and even alternating ones, that are $n$-twins for every 
$n\ge 2$. This is a consequence of the fact that the construction of cyclic 
branched covers does not depend on a chosen orientation of the knot. An example 
of such twins is provided by the connected sum of the non invertible knot 
$8_{17}$ (in Rolfsen's notation) with itself and the connected sum of the same 
knot with its reverse (for the non invertibility of $8_{17}$ see \cite{KL}). 

As cyclic branched covers of composite knots are not prime while the $3$-sphere
is irreducible, two $n$-twins are either both composite or both prime, so the 
only situation left to consider is that of $2$-twins that are prime. Here we 
are somehow disregarding the case of the trivial knot which is neither 
composite nor prime (as the unit of the monoid structure induced by composition
on the set of oriented knots). One can easily see that, for all $n\ge 2$, the 
$n$-fold cyclic branched cover of the trivial knot is the $3$-sphere. It 
follows then from the positive answer to Smith's conjecture \cite{MB} that, for 
every $n\ge 2$, the trivial knot has no $n$-twins. 

Turning our attention to the case of $2$-twins of prime knots, we see that
Montesinos knots \cite{Mo} admit $2$-twins as soon as their Montesinos 
presentation consists of at least four tangles and at least three distinct 
tangles appear (this latter condition is not necessary, though). Since 
alternating Montesinos knots with these properties clearly exist, the 
conclusion of Theorem~\ref{t:main} is false if $n=2$. 

Notice that the given examples of $2$-twins are instances of a more general 
phenomenon called \emph{Conway mutation}: if $S$ is a $2$-sphere that meets a
knot $K$ in four points, one can remove one of the two $3$-balls bounded by $S$
and the tangle it contains and glue it back in a different way, obtaining thus
a new knot $K'$ (see for instance \cite[p. 259]{P2} for more details on this 
construction). The new knot $K'$ is obtained from $K$ by \emph{Conway mutation} 
and is a \emph{Conway mutant} of $K$. It turns out that $K$ and $K'$ have 
homeomorphic $2$-fold branched covers. They need not be $2$-twins, for they may 
be equivalent. For the two knots to be $2$-twins, it is necessary (but possibly 
not sufficient) that the sphere $S$ is an \emph{essential Conway sphere}, that 
its intersection with the exterior of $K$ is incompressible and 
$\partial$-incompressible in the knot exterior. Possibly the most famous Conway 
mutants that are also $2$-twins but not Montesinos knots (nor alternating) are 
the Conway \cite{C} and Kinoshita-Terasaka \cite{KT} knots. 

It was shown by Greene in \cite{G1} that two prime alternating $2$-twins are 
necessarily Conway mutants. Greene conjectures that if a prime alternating knot 
$K$ admits a $2$-twin $K'$ then $K'$ is necessarily a Conway mutant of $K$ and, 
in particular, is itself alternating.

This conjecture is all the more striking if one considers that there is a 
plethora of phenomena giving rise to $2$-twins of a prime knot, as discussed
for instance in \cite{P2}. Among the different constructions of $2$-twin knots, 
there is one, originally introduced by Nakanishi \cite{N} and Sakuma \cite{S}, 
that can be exploited to produce $n$-twins for any $n\ge 2$. Given a 
two-component link $L=L_1\cup L_2$ in the $3$-sphere both of whose components 
are trivial knots, one can consider the $n$-fold cyclic branched cover of the 
component $L_i$, $i=1,2$. Since $L_i$ is the trivial knot, the manifold 
$M(L_i,n)$ is the $3$-sphere. For $j\neq i$, the preimage of $L_j$ in 
$M(L_i,n)$ is connected, and thus a knot $K_j$, provided that $n$ and the 
linking number of $L_1$ and $L_2$ are coprime. If this is the case, by 
construction the knots $K_1$ and $K_2$ have homeomorphic $n$-fold cyclic 
branched covers. Intuitively, if no homeomorphism of $\S^3$ exchanges the two 
components of $L$ one can expect $K_1$ and $K_2$ to be non equivalent, that is 
they are genuine $n$-twins. 

The most remarkable fact about this construction was pointed out by Zimmermann
in \cite{Z}. He proved that, if $K$ is a hyperbolic knot admitting an $n$-twin 
$K'$ for some $n>2$, then $K$ and $K'$ must be obtained as the $K_1$ and $K_2$ 
of Nakanishi and Sakuma's construction just described. This is one of the key 
points in the proof of the main result. Indeed, prime alternating knots are 
either hyperbolic or torus knots. It is probably folklore that the conclusion 
of Theorem~\ref{t:main} holds for torus knots of any kind. We will provide a
proof for the sake of completeness in Proposition~\ref{p:torus} of 
Section~\ref{s:proof}. As a consequence, one is only left to consider the 
hyperbolic case. 

The second key ingredient in the proof of the main result is that certain types
of symmetries of prime alternating knots are \emph{visible} on an alternating
diagram. The notion of visibility of symmetries on minimal diagrams of 
prime alternating knots has been considered as a straightforward consequence of 
the proof of Tait's flyping conjecture (see, for instance, \cite{HTW}) by 
Menasco and Thistlethwaite \cite{MT}. Unfortunately, most of the time no 
precise definition of the meaning of ``visible" seem to be provided in the 
literature where the notion appears. 

In our case, we need certain symmetries, i.e. \emph{periods} (see 
Section~\ref{s:quotients} for the definition), to be visible in a very specific
sense (see Section~\ref{s:quotients} for the actual definition of visibility we
need). A detailed proof of this fact was given recently by Costa and Quach
Hongler in \cite{CQ} (see also \cite{Bo} for the case of symmetries of odd 
prime order). The importance of this fact as a central tool in the proof 
becomes clear once we restate the aforementioned result of Zimmermann's in the 
following way. Let $K$ be a hyperbolic knot and $n>2$. $K$ admits an $n$-twin 
if and only if $K$ admits an $n$-period $\psi$ such that the quotient knot
$K/\langle \psi \rangle$ is the trivial knot and no homemorphism of $\S^3$
exchanges the components of the link $(K\cup Fix(\psi))/\langle \psi \rangle$.
Incidentally, we note that prime satellite $n$-twins, with $n>2$, need not be
related in this way according to the examples given in \cite{BPa}.

\bigskip

The organisation of the paper is the following. In Section~\ref{s:quotients} we
show that if an alternating knot $K$ admits a period $\psi$ which is visible on
a minimal alternating diagram and such that $K/\langle \psi \rangle$ is the 
trivial knot, then the quotient admits a diagram of a specific form. This
uses properties of alternating diagrams of the trivial knot. In 
Section~\ref{s:symmetry}, we exploit the structure of the link 
$(K\cup Fix(\psi))/\langle \psi \rangle$ determined in the previous section to
show that the there is a homemorphism of $\S^3$ exchanging its components.
Finally, in Section~\ref{s:proof} we prove Theorem~\ref{t:main}.


\section{Quotients diagrams of prime alternating knots via special periodic
symmetries}\label{s:quotients}

In this section we determine a diagram of the quotient of an alternating knot
$K$ by the action of a period under the hypotheses that the period is visible 
on a minimal alternating diagram for $K$ and the quotient knot is trivial.

We need the following definitions.

\begin{definition}
A \emph{period of order $n$} or \emph{$n$-period} of a knot $K$ is 
an orientation preserving diffeomorphism $\psi$ of order $n\ge 2$ of the 
$3$-sphere which leaves $K$ invariant and such that its fixed-point set is a 
circle disjoint from $K$.
\end{definition}

\begin{definition}
Let $K$ be a knot admitting a period $\psi$ of order $n$. We say that
\emph{$\psi$ is visible on a diagram $D$ for $K$} if there exists a $2$-sphere
$S$ embedded in $\S^3$ and a projection $p:\S^3\setminus\{ * \}\longrightarrow 
S$ such that $p(K)=D$, $\psi(S)=S$, and there is a diffeomorphism 
$\hat\psi: S\longrightarrow S$ of order $n$ such that $\hat\psi\circ
p=p\circ\psi$. 
\end{definition}

\begin{figure}[ht]
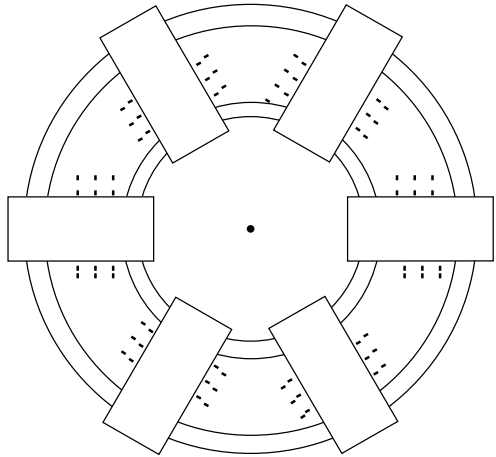

 \begin{center}



\ifx\XFigwidth\undefined\dimen1=0pt\else\dimen1\XFigwidth\fi
\divide\dimen1 by 2896
\ifx\XFigheight\undefined\dimen3=0pt\else\dimen3\XFigheight\fi
\divide\dimen3 by 2707
\ifdim\dimen1=0pt\ifdim\dimen3=0pt\dimen1=4143sp\dimen3\dimen1
  \else\dimen1\dimen3\fi\else\ifdim\dimen3=0pt\dimen3\dimen1\fi\fi
\tikzpicture[x=+\dimen1, y=+\dimen3]
{\ifx\XFigu\undefined\catcode`\@11
\def\temp{\alloc@1\dimen\dimendef\insc@unt}\temp\XFigu\catcode`\@12\fi}
\XFigu4143sp
\ifdim\XFigu<0pt\XFigu-\XFigu\fi
\clip(213,-3148) rectangle (3109,-441);
\tikzset{inner sep=+0pt, outer sep=+0pt}
\pgfsetlinewidth{+7.5\XFigu}
\draw  (1665,-1800) circle [radius=+765];
\draw  (1665,-1800) circle [radius=+1222];
\filldraw  (1661,-1790) circle [radius=+19];
\draw  (1661,-1790) circle [radius=+670];
\pgfsetdash{}{+0pt}
\draw  (1661,-1790) circle [radius=+1340];
\pgfsetfillcolor{white}
\filldraw (225,-1599)--(225,-1982)--(1087,-1982)--(1087,-1599)--cycle;
\filldraw (1100,-459)--(769,-650)--(1200,-1397)--(1532,-1205)--cycle;
\filldraw (1549,-2390)--(1218,-2198)--(787,-2945)--(1118,-3136)--cycle;
\filldraw (2559,-644)--(2227,-453)--(1797,-1200)--(2128,-1391)--cycle;
\pgfsetlinewidth{+30\XFigu}
\pgfsetdash{{+15\XFigu}{+90\XFigu}}{+15\XFigu}
\draw (540,-1575)--(900,-1575);
\draw (540,-2025)--(900,-2025);
\draw (1305,-720)--(1485,-1035);
\draw (2025,-720)--(1845,-1035);
\draw (900,-2655)--(1080,-2385);
\draw (1305,-2880)--(1485,-2565);
\draw (1845,-2610)--(2025,-2925);
\draw (2430,-1575)--(2790,-1575);
\draw (2475,-2025)--(2835,-2025);
\draw (2520,-990)--(2295,-1305);
\pgfsetlinewidth{+7.5\XFigu}
\pgfsetdash{}{+0pt}
\filldraw (2235,-1599)--(2235,-1982)--(3097,-1981)--(3097,-1599)--cycle;
\pgfsetlinewidth{+30\XFigu}
\pgfsetdash{{+15\XFigu}{+90\XFigu}}{+15\XFigu}
\draw (2250,-2385)--(2430,-2700);
\pgfsetlinewidth{+7.5\XFigu}
\pgfsetdash{}{+0pt}
\filldraw (2101,-2193)--(1769,-2384)--(2201,-3130)--(2532,-2939)--cycle;
\pgfsetlinewidth{+30\XFigu}
\pgfsetdash{{+15\XFigu}{+90\XFigu}}{+15\XFigu}
\draw (1980,-675)--(1755,-1035);
\draw (2475,-945)--(2250,-1260);
\draw (2430,-1485)--(2835,-1485);
\draw (2475,-2070)--(2835,-2070);
\draw (2295,-2340)--(2475,-2655);
\draw (1800,-2655)--(1980,-2925);
\draw (1350,-2925)--(1530,-2610);
\draw (855,-2610)--(1035,-2340);
\draw (540,-2070)--(900,-2070);
\draw (540,-1485)--(900,-1485);
\draw (1350,-675)--(1530,-990);
\draw (900,-945)--(1080,-1260);
\draw (855,-990)--(1035,-1305);
\endtikzpicture%

\end{center}
  \caption{A schematic diagram where a $6$-period is visible.}
 \label{fig:symm}
\end{figure}

Figure~\ref{fig:symm} shows a schematic diagram where a $6$-period is 
visible. The dot in the middle is one of the two intersections of the 
fixed-point set of the period with the sphere of projection. The crossings of 
the knot can be arranged to sit inside six equal tangles, represented by
rectangles in the figure, that are permuted by the period which acts on the 
sphere as a rotation about the central dot (and the point at infinity).

Notice that according to the given definition of visibility a symmetry of the 
knot that acts freely cannot be visible if it has order $>2$ for it cannot 
leave invariant a $2$-sphere.

For simplicity, in the following we will abuse notation and write $\psi$ 
even when referring to $\hat\psi$.

\begin{Proposition}\label{p:quodiag}
Let $K$ be a non trivial prime alternating knot with period $\psi$ of oder $n$. 
Assume that $\psi$ is visible on a minimal diagram $D$ for $K$ and that 
$K/\langle \psi \rangle$ is the trivial knot. Then, up to diagram isotopy
relative to the axis, the diagram $D/\langle \psi \rangle$ is alternating and 
of the form shown on the left-hand side of Figure~\ref{fig:quodiag}, where the 
dot represents the axis of $\psi$ and each box a sequence of crossings.
\end{Proposition}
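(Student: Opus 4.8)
\medskip
\noindent\emph{Strategy of proof.}
The idea is to pass to the quotient $2$-sphere and there exploit the rigidity of alternating diagrams of the trivial knot.

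First I set up the quotient. Visibility provides a $\psi$-invariant $2$-sphere $S$ and a finite-order rotation $\hat\psi$ of $S$ with $\hat\psi\circ p=p\circ\psi$ and $p(K)=D$; since $\psi$ preserves orientation, $\hat\psi$ is an orientation-preserving rotation whose two fixed points $x_0$ (the dot) and $x_\infty$ are the two points of $Fix(\psi)\cap S$. Because $\psi$ is visible, the crossings of $D$ may be taken to lie in $n$ equal tangles cyclically permuted by $\hat\psi$ (cf. Figure~\ref{fig:symm}), so $\hat\psi$ acts freely on the crossing set of $D$. Hence $\bar S:=S/\langle\hat\psi\rangle$ is again a $2$-sphere, the image $\bar D$ of $D$ is a diagram of the trivial knot $K/\langle\psi\rangle$ carrying the marked points $\bar x_0$ (the axis) and $\bar x_\infty$, and $S\to\bar S$ is a local homeomorphism near every crossing that respects the over/under strands. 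As $D$ is alternating, this local condition descends: $\bar D$ is an alternating diagram of the unknot, and it remains alternating under any isotopy of $\bar S$.

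Second I invoke the structure theory. Since $K$ is non-trivial, $D$ has $c\ge 1$ crossings, so $\bar D$ has $c/n\ge 1$ crossings. By the Tait conjectures (Kauffman--Murasugi--Thistlethwaite) a reduced alternating diagram of a knot realises its crossing number; the unknot has crossing number $0$, so $\bar D$ is not reduced and has a nugatory crossing, cut off by a simple closed curve $\gamma\subset\bar S$ meeting $\bar D$ transversally in one point. I claim $\gamma$ separates $\bar x_0$ from $\bar x_\infty$: otherwise one of the two discs it bounds contains neither marked point, so $\gamma$ is null-homotopic in the annulus $\bar S\setminus\{\bar x_0,\bar x_\infty\}$, its preimage in $S$ is $n$ disjoint parallel copies of $\gamma$, and each such copy meets $D$ in a single point, hence is a nugatory curve for $D$ --- impossible, because $D$, being minimal, is reduced. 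Moreover, since its preimage $K$ is connected, $\bar D$ links $Fix(\psi)/\langle\psi\rangle$ a number of times coprime to $n$; being $0$ or $\pm1$ for an embedded curve, this number is $\pm1$, so the underlying curve of $\bar D$ is a core of the annulus $\bar S\setminus\{\bar x_0,\bar x_\infty\}$.

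Third I upgrade this to the normal form. Analysing $\bar D$ by removing its nugatory crossings one at a time in an auxiliary reduction to the crossingless diagram, the cutting curves that appear are, by the claim above, all forced to separate $\bar x_0$ from $\bar x_\infty$; these curves can be realised disjointly in $\bar D$ and, being essential in the same annulus, are pairwise parallel, so straightening them to radial dividers arranges every crossing of $\bar D$ into the chain of boxes strung along the arc from $\bar x_0$ to $\bar x_\infty$ displayed on the left of Figure~\ref{fig:quodiag}. The isotopy used fixes $\bar x_0$ and $\bar x_\infty$, so it is relative to the axis, and the resulting diagram, being isotopic on $\bar S$ to $\bar D$, is still alternating. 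I expect this last step to be the main obstacle: one must understand how the twist regions are nested, rule out genuinely branching (tree-like) configurations and twist regions not threaded by the axis, and check that the whole picture straightens to the pictured shape; here one also has to feed in that the $n$-fold cyclic cover of $\bar D$ branched over $\bar x_0$ is connected and reproduces the prime alternating diagram $D$, which is what pins down the relative position of the axis and the boxes. The first two steps are, by contrast, bookkeeping together with an appeal to the Tait conjectures.
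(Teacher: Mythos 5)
Your setup and first step do track the paper's opening moves: the quotient diagram is an alternating diagram of the unknot (the paper quotes Bankwitz \cite{Ba} rather than the Tait conjectures for the existence of a nugatory crossing), and the lifting argument showing that a nugatory loop not encircling the axis would produce a nugatory crossing in the minimal diagram $D$ is exactly the paper's observation that the Reidemeister~I move cannot take place inside the tangle. However, your second step already contains an error: the claim that the linking number of $K/\langle\psi\rangle$ with $Fix(\psi)/\langle\psi\rangle$ must be $0$ or $\pm1$ ``for an embedded curve'' is false --- disjoint embedded circles in $\S^3$ can have arbitrary linking number, and connectedness of the preimage only forces this number to be coprime to $n$. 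Correspondingly, the underlying curve of $\bar D$ is \emph{not} in general a core of the annulus $\bar S\setminus\{\bar x_0,\bar x_\infty\}$: the normal form you are trying to prove (left of Figure~\ref{fig:quodiag}) is the closure of a $k$-strand tangle by $k$ arcs running around the axis, with $k$ typically $\ge 2$, so an argument premised on winding number $\pm1$ cannot produce it.

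The more serious problem is the third step, which you yourself flag as the main obstacle: it is precisely where the paper's work lies, and your justification for it does not go through. The assertion that every cutting curve appearing in the auxiliary reduction separates $\bar x_0$ from $\bar x_\infty$ does not follow ``by the claim above'': that claim used that the branched preimage of $\bar D$ is the \emph{minimal} diagram $D$, whereas after the first Reidemeister~I moves the intermediate diagrams no longer lift to $D$, so minimality cannot be invoked again. The paper supplies the missing argument by induction on the tangle size $k$: one removes a \emph{maximal} string of half-twists adjacent to the loop around the dot, observes that the modified diagram must again contain a nugatory crossing but that this crossing cannot have been nugatory before the modification, and deduces that it must be adjacent to both ends of the untwisted arc; this forces the two adjacent crossings $A$ and $B$ of Figure~\ref{fig:withcrossings} to coincide, hence a strand must travel around the dot before crossing back. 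That analysis is what rules out the branching and off-axis twist configurations you mention, produces the chain of boxes alternating around the axis, and lets the induction close (together with the final check that reinserting the removed twists preserves the form, up to isotopy fixing the axis). Without it, or a substitute for it, the proposal does not establish the stated normal form.
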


\begin{figure}[ht]
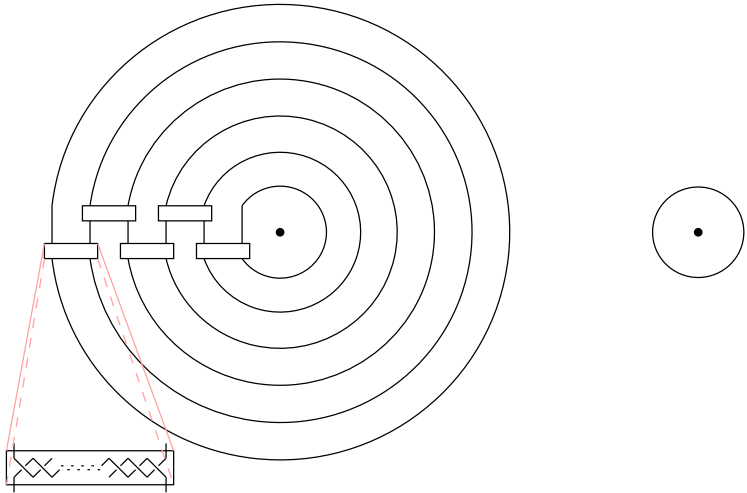

 \begin{center}



\ifx\XFigwidth\undefined\dimen1=0pt\else\dimen1\XFigwidth\fi
\divide\dimen1 by 4385
\ifx\XFigheight\undefined\dimen3=0pt\else\dimen3\XFigheight\fi
\divide\dimen3 by 2933
\ifdim\dimen1=0pt\ifdim\dimen3=0pt\dimen1=4143sp\dimen3\dimen1
  \else\dimen1\dimen3\fi\else\ifdim\dimen3=0pt\dimen3\dimen1\fi\fi
\tikzpicture[x=+\dimen1, y=+\dimen3]
{\ifx\XFigu\undefined\catcode`\@11
\def\temp{\alloc@1\dimen\dimendef\insc@unt}\temp\XFigu\catcode`\@12\fi}
\XFigu4143sp
\ifdim\XFigu<0pt\XFigu-\XFigu\fi
\definecolor{pink2}{rgb}{1,0.63,0.63}
\clip(123,-3117) rectangle (4508,-184);
\tikzset{inner sep=+0pt, outer sep=+0pt}
\pgfsetlinewidth{+7.5\XFigu}
\draw (180,-2902)--(293,-3015);
\draw (293,-2902)--(405,-3015);
\draw (742,-2902)--(855,-3015);
\draw (855,-2902)--(968,-3015);
\draw (968,-2902)--(1080,-3015);
\draw (293,-2902)--(247,-2947);
\draw (225,-2970)--(180,-3015);
\draw (338,-2970)--(293,-3015);
\draw (405,-2902)--(360,-2947);
\draw (450,-2970)--(405,-3015);
\draw (855,-2902)--(810,-2947);
\draw (968,-2902)--(923,-2947);
\draw (1080,-2902)--(1035,-2947);
\draw (1012,-2970)--(968,-3015);
\draw (900,-2970)--(855,-3015);
\draw (787,-2970)--(742,-3015);
\draw (742,-2902)--(698,-2947);
\draw (180,-2813)--(180,-2902);
\draw (1080,-2813)--(1080,-2902);
\draw (1080,-3015)--(1080,-3105);
\draw (180,-3015)--(180,-3105);
\draw (135,-2857) rectangle (1125,-3060);
\pgfsetdash{{+15\XFigu}{+45\XFigu}}{+15\XFigu}
\draw (698,-2947)--(450,-2947);
\draw (450,-2970)--(698,-2970);
\pgfsetdash{}{+0pt}
\draw (1530,-1395) arc[start angle=+145.0, end angle=+-145.0, radius=+274.6];
\draw (1305,-1395) arc[start angle=+160.7, end angle=+-160.7, radius=+476.8];
\draw (1080,-1395) arc[start angle=+166.87, end angle=+-166.87, radius=+693.1];
\draw (855,-1395) arc[start angle=+170.07, end angle=+-170.07, radius=+913.7];
\draw (630,-1395) arc[start angle=+172.03, end angle=+-172.03, radius=+1136];
\pgfsetdash{}{+0pt}
\draw (405,-1395) arc[start angle=+173.35, end angle=+-173.35, radius=+1359.2];
\pgfsetdash{}{+0pt}
\filldraw  (4230,-1553) circle [radius=+22];
\draw  (4230,-1553) circle [radius=+270];
\filldraw  (1755,-1553) circle [radius=+22];
\draw (1575,-1710) rectangle (1260,-1620);
\draw (1350,-1485) rectangle (1035,-1395);
\draw (1125,-1710) rectangle (810,-1620);
\draw (900,-1485) rectangle (585,-1395);
\draw (675,-1710) rectangle (360,-1620);
\draw (1305,-1485)--(1305,-1620);
\draw (1080,-1485)--(1080,-1620);
\draw (855,-1485)--(855,-1620);
\draw (630,-1485)--(630,-1620);
\draw (405,-1395)--(405,-1620);
\draw (1530,-1395)--(1530,-1620);
\pgfsetstrokecolor{pink2}
\pgfsetdash{{+60\XFigu}{+60\XFigu}}{++0pt}
\draw (360,-1710)--(135,-3060);
\draw (675,-1710)--(1125,-3060);
\pgfsetdash{}{+0pt}
\draw (360,-1620)--(135,-2857);
\draw (675,-1620)--(1125,-2857);
\endtikzpicture%

\end{center}
  \caption{The structure of the diagram $D/\langle \psi \rangle$ of the
quotient knot $K/\langle \psi \rangle$ for a tangle of size $k=5$ on the left,
and the case $k=1$ on the right. Each rectangular box represents a sequence of
crossings.}
 \label{fig:quodiag}
\end{figure}

\begin{proof}
Before proving the proposition, let us explain the structure of the diagram on
the left-hand side of Figure~\ref{fig:quodiag}. The central dot represents one 
of the two intersections of $Fix(\psi)/\langle \psi \rangle$ with the sphere of 
projection, the other being the point at infinity. Each box represents a 
horizontal (with respect to the picture) sequence of crossings, as suggested by 
the drawing below the diagram. Remark that because the diagram is alternating 
the sign of crossings in one box constrains the signs of the crossings in all 
other boxes. This means that each diagram is determined, up to taking a mirror, 
simply by the \emph{size $k$ of the tangle}, i.e. number of arcs, and the 
number of crossings in each of the $k-1$ boxes. Observe also that if the roles 
of the two intersection points of $Fix(\psi)/\langle \psi \rangle$ with the 
sphere of projection are exchanged, then the diagram changes (the size is the 
same, while the numbers of crossings in the boxes appear in reversed order) but 
its structure stays the same. Indeed, the boxes in the picture are alternately 
located below and above with respect to each other, but this relative position 
can be changed since the boxes can be slid around the dot by an isotopy of the 
$2$-sphere leaving the intersections with $Fix(\psi)/\langle \psi \rangle$ 
fixed.

\begin{figure}[ht]
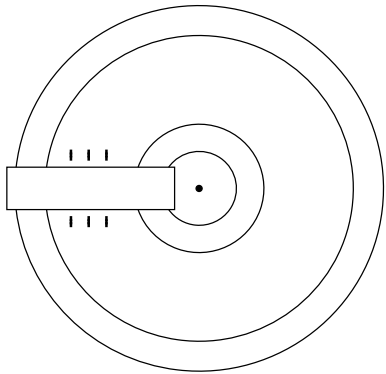

 \begin{center}



\ifx\XFigwidth\undefined\dimen1=0pt\else\dimen1\XFigwidth\fi
\divide\dimen1 by 2250
\ifx\XFigheight\undefined\dimen3=0pt\else\dimen3\XFigheight\fi
\divide\dimen3 by 2200
\ifdim\dimen1=0pt\ifdim\dimen3=0pt\dimen1=4143sp\dimen3\dimen1
  \else\dimen1\dimen3\fi\else\ifdim\dimen3=0pt\dimen3\dimen1\fi\fi
\tikzpicture[x=+\dimen1, y=+\dimen3]
{\ifx\XFigu\undefined\catcode`\@11
\def\temp{\alloc@1\dimen\dimendef\insc@unt}\temp\XFigu\catcode`\@12\fi}
\XFigu4143sp
\ifdim\XFigu<0pt\XFigu-\XFigu\fi
\clip(528,-2382) rectangle (2778,-182);
\tikzset{inner sep=+0pt, outer sep=+0pt}
\pgfsetlinewidth{+7.5\XFigu}
\draw (1498,-1156) arc[start angle=+145.0, end angle=+-145.0, radius=+220.4];
\draw (1317,-1156) arc[start angle=+160.7, end angle=+-160.7, radius=+383];
\draw (775,-1156) arc[start angle=+172.03, end angle=+-172.03, radius=+912.3];
\pgfsetdash{}{+0pt}
\draw (595,-1156) arc[start angle=+173.34, end angle=+-173.34, radius=+1090.9];
\pgfsetdash{}{+0pt}
\filldraw  (1678,-1283) circle [radius=+18];
\pgfsetdash{}{+0pt}
\draw (540,-1156) rectangle (1533,-1409);
\pgfsetlinewidth{+30\XFigu}
\pgfsetdash{{+15\XFigu}{+90\XFigu}}{+15\XFigu}
\draw (1227,-1102)--(866,-1102);
\draw (1227,-1463)--(866,-1463);
\draw (1227,-1083)--(866,-1083)--(847,-1083);
\draw (1227,-1482)--(866,-1482);
\draw (1227,-1066)--(866,-1066);
\draw (1227,-1499)--(866,-1499);
\endtikzpicture%

\end{center}
  \caption{The diagram $D/\langle \psi \rangle$ is the quotient of the
diagram $D$ pictured in Figure~\ref{fig:symm} by the action of the
period and is itself the closure of a tangle.}
 \label{fig:tangleclose}
\end{figure}

Clearly, $D/\langle \psi \rangle$ is the closure of a tangle by means of $k$
arcs going around the axis, as shown in Figure~\ref{fig:tangleclose} with the 
axis represented as usual by a dot. We observe that $D/\langle \psi \rangle$ 
must moreover enjoy the following extra properties: it is an alternating 
diagram representing the trivial knot. In particular, it must present a 
Reidemeister I move allowing to reduce the number of crossings according to
\cite{Ba}. The move cannot take place inside the tangle, else $D$ would not be 
minimal. So the loop involved in the move must contain the dot corresponding to 
the axis of the period. The proof will be by induction on $k$, the size of the 
tangle, in a diagram of the form and with the properties just discussed. 

\begin{figure}[ht]
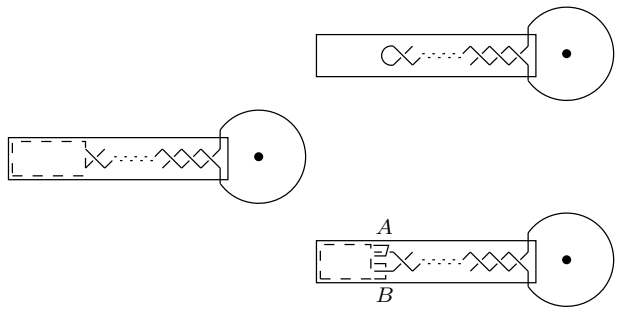

 \begin{center}



\ifx\XFigwidth\undefined\dimen1=0pt\else\dimen1\XFigwidth\fi
\divide\dimen1 by 3603
\ifx\XFigheight\undefined\dimen3=0pt\else\dimen3\XFigheight\fi
\divide\dimen3 by 1803
\ifdim\dimen1=0pt\ifdim\dimen3=0pt\dimen1=4143sp\dimen3\dimen1
  \else\dimen1\dimen3\fi\else\ifdim\dimen3=0pt\dimen3\dimen1\fi\fi
\tikzpicture[x=+\dimen1, y=+\dimen3]
{\ifx\XFigu\undefined\catcode`\@11
\def\temp{\alloc@1\dimen\dimendef\insc@unt}\temp\XFigu\catcode`\@12\fi}
\XFigu4143sp
\ifdim\XFigu<0pt\XFigu-\XFigu\fi
\clip(213,-2037) rectangle (3816,-234);
\tikzset{inner sep=+0pt, outer sep=+0pt}
\pgfsetlinewidth{+7.5\XFigu}
\draw (3301,-362) arc[start angle=+145.0, end angle=+-145.0, radius=+278.3];
\draw (3301,-1591) arc[start angle=+145.0, end angle=+-145.0, radius=+278.3];
\pgfsetdash{}{+0pt}
\draw (2503,-589) arc[start angle=+281.5, end angle=+78.5, radius=+57.7];
\pgfsetdash{}{+0pt}
\draw (1478,-977) arc[start angle=+145.1, end angle=+-145.1, radius=+278];
\filldraw  (3529,-521) circle [radius=+23];
\filldraw  (3529,-1751) circle [radius=+23];
\filldraw  (1706,-1136) circle [radius=+23];
\draw (2503,-476)--(2617,-589);
\draw (2959,-476)--(3073,-589);
\draw (3073,-476)--(3187,-589);
\draw (3187,-476)--(3301,-589);
\draw (2549,-544)--(2503,-589);
\draw (2617,-476)--(2572,-521);
\draw (2663,-544)--(2617,-589);
\draw (3073,-476)--(3028,-521);
\draw (3187,-476)--(3141,-521);
\draw (3301,-476)--(3255,-521);
\draw (3233,-544)--(3187,-589);
\draw (3119,-544)--(3073,-589);
\draw (3005,-544)--(2959,-589);
\draw (2959,-476)--(2914,-521);
\draw (3301,-362)--(3301,-476);
\draw (3301,-589)--(3301,-681);
\pgfsetdash{{+15\XFigu}{+45\XFigu}}{+15\XFigu}
\draw (2914,-521)--(2663,-521);
\draw (2663,-544)--(2914,-544);
\pgfsetdash{}{+0pt}
\draw (2503,-1705)--(2617,-1819);
\draw (2959,-1705)--(3073,-1819);
\draw (3073,-1705)--(3187,-1819);
\draw (3187,-1705)--(3301,-1819);
\draw (2549,-1774)--(2503,-1819);
\draw (2617,-1705)--(2572,-1751);
\draw (2663,-1774)--(2617,-1819);
\draw (3073,-1705)--(3028,-1751);
\draw (3187,-1705)--(3141,-1751);
\draw (3301,-1705)--(3255,-1751);
\draw (3233,-1774)--(3187,-1819);
\draw (3119,-1774)--(3073,-1819);
\draw (3005,-1774)--(2959,-1819);
\draw (2959,-1705)--(2914,-1751);
\draw (3301,-1591)--(3301,-1705);
\draw (3301,-1819)--(3301,-1910);
\pgfsetdash{{+15\XFigu}{+45\XFigu}}{+15\XFigu}
\draw (2914,-1751)--(2663,-1751);
\draw (2663,-1774)--(2914,-1774);
\pgfsetdash{}{+0pt}
\draw (681,-1090)--(795,-1204);
\draw (1136,-1090)--(1250,-1204);
\draw (1250,-1090)--(1364,-1204);
\draw (1364,-1090)--(1478,-1204);
\draw (726,-1159)--(681,-1204);
\draw (795,-1090)--(749,-1136);
\draw (840,-1159)--(795,-1204);
\draw (1250,-1090)--(1205,-1136);
\draw (1364,-1090)--(1319,-1136);
\draw (1478,-1090)--(1433,-1136);
\draw (1410,-1159)--(1364,-1204);
\draw (1296,-1159)--(1250,-1204);
\draw (1182,-1159)--(1136,-1204);
\draw (1136,-1090)--(1091,-1136);
\draw (1478,-977)--(1478,-1090);
\draw (1478,-1204)--(1478,-1295);
\pgfsetdash{{+15\XFigu}{+45\XFigu}}{+15\XFigu}
\draw (1091,-1136)--(840,-1136);
\draw (840,-1159)--(1091,-1159);
\pgfsetdash{}{+0pt}
\draw (225,-1022) rectangle (1524,-1273);
\pgfsetdash{{+60\XFigu}{+60\XFigu}}{++0pt}
\draw (248,-1045) rectangle (681,-1250);
\pgfsetdash{}{+0pt}
\draw (2048,-407) rectangle (3347,-658);
\draw (2048,-1637) rectangle (3347,-1888);
\pgfsetdash{}{+0pt}
\draw (2390,-1819)--(2503,-1819);
\draw (2481,-1705)--(2503,-1705);
\draw (2390,-1705)--(2435,-1705);
\draw (2385,-1665)--(2475,-1665)--(2458,-1728)--(2390,-1728);
\draw (2390,-1774)--(2458,-1774)--(2458,-1796);
\draw (2458,-1842)--(2458,-1865)--(2390,-1865);
\pgfsetdash{{+60\XFigu}{+60\XFigu}}{++0pt}
\draw (2071,-1660) rectangle (2370,-1865);
\pgftext[base,left,at=\pgfqpointxy{2400}{-1600}]
{\fontsize{8}{9.6}\usefont{T1}{ptm}{m}{n}$A$};
\pgftext[base,left,at=\pgfqpointxy{2400}{-2000}]
{\fontsize{8}{9.6}\usefont{T1}{ptm}{m}{n}$B$};
\endtikzpicture%

\end{center}
  \caption{The case $k=1$ where crossings are present. A maximal string of
crossings adjacent to the nugatory one is put in evidence. If there is no other
crossing the situation is as pictured on the top-right, else the two arcs
exiting the maximal string must encounter two distinct crossings labelled $A$
and $B$ as pictured on the bottom-right.}
 \label{fig:induction0}
\end{figure}

If $k=1$ we want to prove that the tangle must consist of a straight 
arc, that is the diagram has no crossings (as in the right-hand side of 
Figure~\ref{fig:quodiag}); in particular, this situation cannot arise under the 
hypotheses of the proposition, for $K$ would be trivial in this case. If that 
is not the case, then the situation is as shown in Figure~\ref{fig:induction0} 
where a maximal string of half-twists adjacent to the loop involved in the
Reidemeister I move is put in evidence. If there are no other 
crossing inside the tangle, then the diagram must be of the form shown on the
top-right of Figure~\ref{fig:induction0}. So assume there are other crossings 
inside the tangle. The situation must be as in the bottom-right of 
Figure~\ref{fig:induction0}, where the crossings $A$ and $B$ must be distinct, 
since the string of half-twists was chosen to be maximal. At this point we see 
that either a nugatory crossing was already present inside the tangle, against 
the hypothesis, or no Reidemeiset I move can be performed, once more 
contradicting the hypothesis, since the knot would not be trivial. 

\begin{figure}[ht]
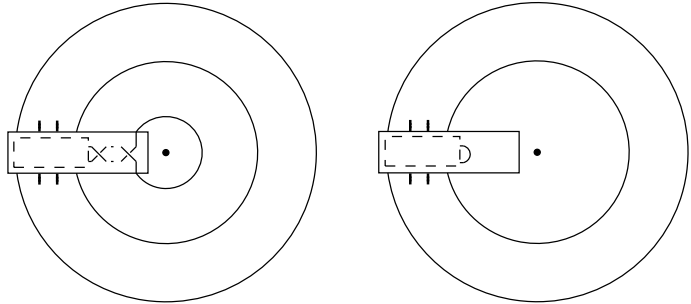

 \begin{center}



\ifx\XFigwidth\undefined\dimen1=0pt\else\dimen1\XFigwidth\fi
\divide\dimen1 by 4046
\ifx\XFigheight\undefined\dimen3=0pt\else\dimen3\XFigheight\fi
\divide\dimen3 by 1804
\ifdim\dimen1=0pt\ifdim\dimen3=0pt\dimen1=4143sp\dimen3\dimen1
  \else\dimen1\dimen3\fi\else\ifdim\dimen3=0pt\dimen3\dimen1\fi\fi
\tikzpicture[x=+\dimen1, y=+\dimen3]
{\ifx\XFigu\undefined\catcode`\@11
\def\temp{\alloc@1\dimen\dimendef\insc@unt}\temp\XFigu\catcode`\@12\fi}
\XFigu4143sp
\ifdim\XFigu<0pt\XFigu-\XFigu\fi
\clip(128,-2024) rectangle (4174,-220);
\tikzset{inner sep=+0pt, outer sep=+0pt}
\pgfsetlinewidth{+7.5\XFigu}
\draw (899,-1002) arc[start angle=+145.1, end angle=+-145.1, radius=+214.7];
\draw (546,-1002) arc[start angle=+166.9, end angle=+-166.9, radius=+543.1];
\draw (193,-1002) arc[start angle=+172.06, end angle=+-172.06, radius=+890.5];
\filldraw  (1075,-1125) circle [radius=+18];
\draw (635,-1090)--(722,-1178);
\draw (810,-1090)--(899,-1178);
\draw (722,-1090)--(687,-1125);
\draw (899,-1090)--(864,-1125);
\draw (845,-1143)--(810,-1178);
\draw (670,-1143)--(635,-1178);
\draw (899,-1002)--(899,-1090);
\draw (899,-1178)--(899,-1248);
\pgfsetdash{{+15\XFigu}{+45\XFigu}}{+15\XFigu}
\draw (810,-1090)--(722,-1090);
\draw (810,-1178)--(722,-1178);
\pgfsetdash{}{+0pt}
\draw (616,-1090)--(635,-1090);
\draw (616,-1178)--(635,-1178);
\pgfsetdash{}{+0pt}
\draw (140,-1002) rectangle (969,-1248);
\pgfsetdash{{+60\XFigu}{+60\XFigu}}{++0pt}
\draw (175,-1037) rectangle (616,-1213);
\pgfsetlinewidth{+30\XFigu}
\pgfsetdash{{+15\XFigu}{+90\XFigu}}{+15\XFigu}
\draw (229,-1266)--(528,-1266);
\draw (229,-1284)--(528,-1284);
\draw (229,-1301)--(528,-1301);
\draw (229,-949)--(528,-949);
\draw (229,-966)--(528,-966);
\draw (229,-984)--(528,-984);
\pgfsetlinewidth{+7.5\XFigu}
\pgfsetdash{}{+0pt}
\draw (2742,-999) arc[start angle=+166.9, end angle=+-166.9, radius=+544.7];
\draw (2389,-999) arc[start angle=+172.05, end angle=+-172.05, radius=+892.6];
\pgfsetdash{}{+0pt}
\draw (2835,-1085) arc[start angle=+79, end angle=+-79, radius=+51];
\pgfsetdash{}{+0pt}
\filldraw  (3273,-1123) circle [radius=+18];
\draw (2335,-999) rectangle (3166,-1246);
\pgfsetlinewidth{+30\XFigu}
\pgfsetdash{{+15\XFigu}{+90\XFigu}}{+15\XFigu}
\draw (2424,-1264)--(2724,-1264);
\draw (2424,-1282)--(2724,-1282);
\draw (2424,-1300)--(2724,-1300);
\draw (2424,-945)--(2724,-945);
\draw (2424,-964)--(2724,-964);
\draw (2424,-981)--(2724,-981);
\pgfsetlinewidth{+7.5\XFigu}
\pgfsetdash{{+60\XFigu}{+60\XFigu}}{++0pt}
\draw (2373,-1029) rectangle (2815,-1205);
\pgfsetdash{}{+0pt}
\draw (2835,-1185)--(2815,-1185);
\draw (2815,-1085)--(2835,-1085);
\endtikzpicture%

\end{center}
  \caption{A new diagram with a tangle of size $k-1$ (shown on the right-hand
side) can be obtained by performing a sequence of Reidemeister I moves that
eliminate a maximal sequence of nugatory crossings (shown on the left-hand
side).}
 \label{fig:induction1}
\end{figure}

We can now assume $k\ge 2$. The situation is thus as shown in 
Figure~\ref{fig:induction1} where, as in the case where $k=1$, a maximal string 
of half-twists adjacent to the loop involved in the Reidemeister I move is put 
in evidence. If we remove the crossings as suggested in the figure by
performing as many Reidemeister I moves as the number of crossings, we get a 
new diagram for the trivial knot, obtained by closing a $(k-1)$-tangle around 
an axis. We claim that this new alternating diagram satisfies the same 
properties as the original one so that we may apply the induction hypothesis to 
finish the proof. The argument is similar to that of the previous case. 

\begin{figure}[ht]
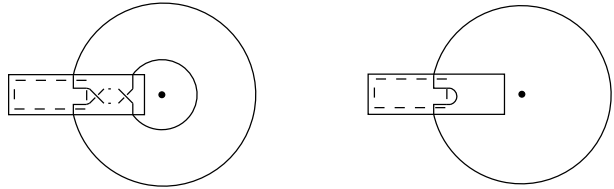

 \begin{center}



\ifx\XFigwidth\undefined\dimen1=0pt\else\dimen1\XFigwidth\fi
\divide\dimen1 by 3587
\ifx\XFigheight\undefined\dimen3=0pt\else\dimen3\XFigheight\fi
\divide\dimen3 by 1110
\ifdim\dimen1=0pt\ifdim\dimen3=0pt\dimen1=4143sp\dimen3\dimen1
  \else\dimen1\dimen3\fi\else\ifdim\dimen3=0pt\dimen3\dimen1\fi\fi
\tikzpicture[x=+\dimen1, y=+\dimen3]
{\ifx\XFigu\undefined\catcode`\@11
\def\temp{\alloc@1\dimen\dimendef\insc@unt}\temp\XFigu\catcode`\@12\fi}
\XFigu4143sp
\ifdim\XFigu<0pt\XFigu-\XFigu\fi
\clip(218,-1342) rectangle (3805,-232);
\tikzset{inner sep=+0pt, outer sep=+0pt}
\pgfsetlinewidth{+7.5\XFigu}
\pgfsetdash{}{+0pt}
\draw (2843,-749) arc[start angle=+79, end angle=+-79, radius=+49.4];
\pgfsetdash{}{+0pt}
\draw (2746,-671) arc[start angle=+167.3, end angle=+-167.3, radius=+531.6];
\draw (965,-668) arc[start angle=+145.1, end angle=+-145.1, radius=+209];
\draw (613,-671) arc[start angle=+167.7, end angle=+-167.7, radius=+546.1];
\filldraw  (3268,-785) circle [radius=+17];
\filldraw  (1137,-788) circle [radius=+17];
\pgfsetdash{{+60\XFigu}{+60\XFigu}}{++0pt}
\draw (2395,-694) rectangle (2824,-865);
\pgfsetdash{}{+0pt}
\draw (2843,-846)--(2824,-846);
\draw (2824,-749)--(2843,-749);
\pgfsetdash{}{+0pt}
\draw (2358,-665) rectangle (3165,-905);
\draw (2746,-671)--(2746,-749)--(2824,-749);
\draw (2824,-846)--(2746,-846)--(2746,-904);
\draw (709,-753)--(794,-839);
\draw (880,-753)--(965,-839);
\draw (794,-753)--(760,-788);
\draw (965,-753)--(931,-788);
\draw (914,-805)--(880,-839);
\draw (743,-805)--(709,-839);
\draw (965,-668)--(965,-753);
\draw (965,-839)--(965,-907);
\pgfsetdash{{+15\XFigu}{+45\XFigu}}{+15\XFigu}
\draw (880,-753)--(794,-753);
\draw (880,-839)--(794,-839);
\pgfsetdash{}{+0pt}
\draw (690,-749)--(709,-753);
\draw (690,-846)--(709,-839);
\pgfsetdash{}{+0pt}
\draw (230,-668) rectangle (1034,-907);
\pgfsetdash{{+60\XFigu}{+60\XFigu}}{++0pt}
\draw (264,-702) rectangle (692,-873);
\pgfsetdash{}{+0pt}
\draw (690,-749)--(613,-749)--(613,-671);
\draw (613,-904)--(613,-846)--(690,-846);
\endtikzpicture%

\end{center}
  \caption{The situation where no other crossing is present. In this case one
must have $k=2$.}
 \label{fig:nocross}
\end{figure}

If there is no other crossing, the only possibility is that $k=2$ and the 
situation is as in Figure~\ref{fig:nocross} which is precisely of the form 
given in Figure~\ref{fig:quodiag}, else we would have a link with more than one 
component. 

We can thus assume that there are other crossings inside the tangle as in 
Figure~\ref{fig:withcrossings}. The main point is that there must be a nugatory 
crossing somewhere in the modified diagram, but such crossing cannot be already 
present in the diagram before modification. Because of that, the crossing must 
be adjacent to both ends of the arc obtained by untwisting the maximal string 
of half-twists. This implies that the crossings $A$ and $B$ adjacent to this
arc (see Figure~\ref{fig:withcrossings} top) must coincide. Note that if $A$ 
and $B$ do not coincide, then a contradiction is reached as in the $k=1$ case, 
albeit now the arc between $A$ and $B$ may go around the dot representing the 
axis (see Figure~\ref{fig:withcrossings} centre-right). So $A$ and $B$ are the 
same crossing and, because the string of half-twists was chosen to be maximal, 
the only possibility is that a strand coming out from the maximal string of 
half-twists goes around the dot representing the axis before coming back to 
cross the second strand, as shown in Figure~\ref{fig:withcrossings} bottom. 
This ensures that the new diagram is of the desired form. 

\begin{figure}[H]
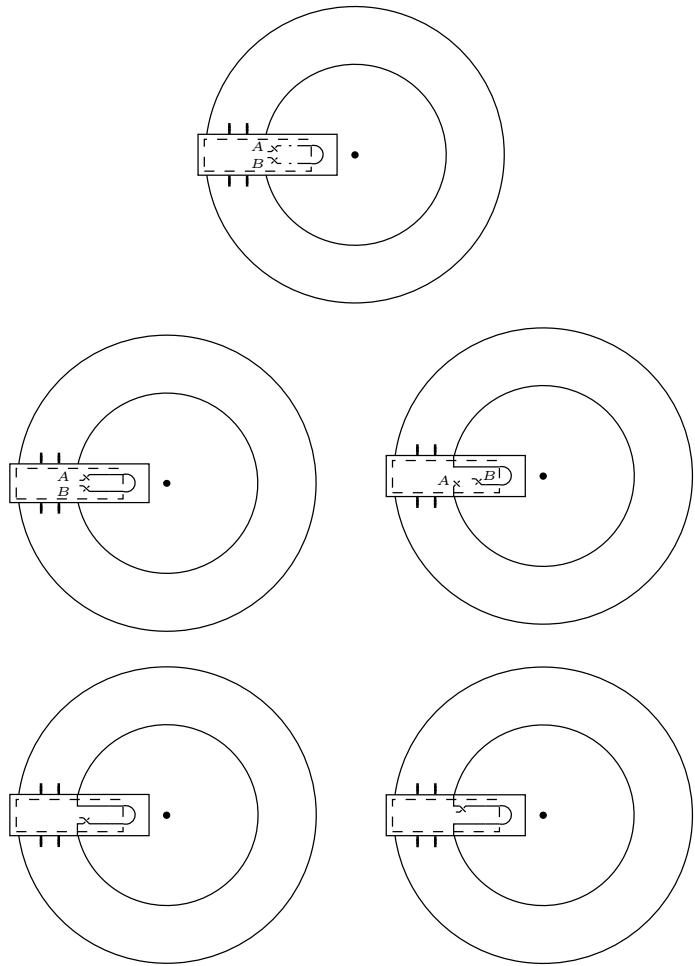

 \begin{center}



\ifx\XFigwidth\undefined\dimen1=0pt\else\dimen1\XFigwidth\fi
\divide\dimen1 by 4061
\ifx\XFigheight\undefined\dimen3=0pt\else\dimen3\XFigheight\fi
\divide\dimen3 by 5729
\ifdim\dimen1=0pt\ifdim\dimen3=0pt\dimen1=4143sp\dimen3\dimen1
  \else\dimen1\dimen3\fi\else\ifdim\dimen3=0pt\dimen3\dimen1\fi\fi
\tikzpicture[x=+\dimen1, y=+\dimen3]
{\ifx\XFigu\undefined\catcode`\@11
\def\temp{\alloc@1\dimen\dimendef\insc@unt}\temp\XFigu\catcode`\@12\fi}
\XFigu4143sp
\ifdim\XFigu<0pt\XFigu-\XFigu\fi
\clip(213,-6077) rectangle (4274,-348);
\tikzset{inner sep=+0pt, outer sep=+0pt}
\pgfsetlinewidth{+7.5\XFigu}
\draw (629,-5060) arc[start angle=+166.8, end angle=+-166.8, radius=+539.7];
\draw (278,-5060) arc[start angle=+172.01, end angle=+-172.01, radius=+884.6];
\draw (895,-5129) arc[start angle=+107, end angle=+-107, radius=+55.4];
\filldraw  (1153,-5184) circle [radius=+18];
\draw (225,-5060) rectangle (1049,-5306);
\pgfsetlinewidth{+30\XFigu}
\pgfsetdash{{+15\XFigu}{+90\XFigu}}{+15\XFigu}
\draw (312,-5323)--(610,-5323);
\draw (312,-5342)--(610,-5342);
\draw (312,-5359)--(610,-5359);
\draw (312,-5008)--(610,-5008);
\draw (312,-5026)--(610,-5026);
\draw (312,-5043)--(610,-5043);
\pgfsetlinewidth{+7.5\XFigu}
\pgfsetdash{{+60\XFigu}{+60\XFigu}}{++0pt}
\draw (263,-5090) rectangle (895,-5282);
\pgfsetdash{}{+0pt}
\draw (895,-5129)--(813,-5129);
\draw (895,-5235)--(813,-5235);
\draw (813,-5235)--(696,-5235)--(660,-5199);
\draw (696,-5199)--(684,-5211);
\draw (636,-5199)--(660,-5199);
\draw (672,-5223)--(660,-5235);
\pgfsetdash{}{+0pt}
\draw (813,-5129)--(625,-5129)--(625,-5058);
\draw (625,-5306)--(625,-5235)--(660,-5235);
\pgfsetdash{}{+0pt}
\draw (2856,-5062) arc[start angle=+166.9, end angle=+-166.9, radius=+539];
\draw (2505,-5062) arc[start angle=+172.07, end angle=+-172.07, radius=+884.5];
\draw (3123,-5129) arc[start angle=+106, end angle=+-106, radius=+55.5];
\filldraw  (3382,-5184) circle [radius=+18];
\draw (2453,-5062) rectangle (3276,-5306);
\pgfsetlinewidth{+30\XFigu}
\pgfsetdash{{+15\XFigu}{+90\XFigu}}{+15\XFigu}
\draw (2541,-5324)--(2838,-5324);
\draw (2541,-5341)--(2838,-5341);
\draw (2541,-5360)--(2838,-5360);
\draw (2541,-5008)--(2838,-5008);
\draw (2541,-5026)--(2838,-5026);
\draw (2541,-5044)--(2838,-5044);
\pgfsetlinewidth{+7.5\XFigu}
\pgfsetdash{{+60\XFigu}{+60\XFigu}}{++0pt}
\draw (2490,-5090) rectangle (3123,-5282);
\pgfsetdash{}{+0pt}
\draw (3123,-5129)--(3041,-5129);
\draw (3123,-5236)--(3041,-5236);
\draw (3041,-5129)--(2923,-5129)--(2912,-5141);
\draw (2923,-5165)--(2888,-5129);
\draw (2865,-5165)--(2888,-5165)--(2900,-5153);
\pgfsetdash{}{+0pt}
\draw (2852,-5306)--(2852,-5236)--(3041,-5236);
\draw (2852,-5059)--(2852,-5129)--(2888,-5129);
\pgfsetdash{}{+0pt}
\draw (2856,-3037) arc[start angle=+166.9, end angle=+-166.9, radius=+539.6];
\draw (2506,-3037) arc[start angle=+172.03, end angle=+-172.03, radius=+884];
\draw (3123,-3105) arc[start angle=+106, end angle=+-106, radius=+55.1];
\filldraw  (3382,-3160) circle [radius=+18];
\draw (2453,-3037) rectangle (3276,-3282);
\pgfsetlinewidth{+30\XFigu}
\pgfsetdash{{+15\XFigu}{+90\XFigu}}{+15\XFigu}
\draw (2540,-3299)--(2838,-3299);
\draw (2540,-3317)--(2838,-3317);
\draw (2540,-3335)--(2838,-3335);
\draw (2540,-2985)--(2838,-2985);
\draw (2540,-3001)--(2838,-3001);
\draw (2540,-3019)--(2838,-3019);
\pgfsetlinewidth{+7.5\XFigu}
\pgfsetdash{}{+0pt}
\draw (3123,-3105)--(2852,-3105);
\draw (3123,-3211)--(3041,-3211);
\draw (2848,-3105)--(2856,-3105);
\pgfsetdash{{+60\XFigu}{+60\XFigu}}{++0pt}
\draw (2490,-3067) rectangle (3123,-3259);
\pgfsetdash{}{+0pt}
\draw (2852,-3105)--(2852,-3034);
\draw (3041,-3211)--(3017,-3211)--(2982,-3176);
\pgfsetdash{}{+0pt}
\draw (2959,-3176)--(2982,-3176);
\draw (2993,-3199)--(2982,-3211);
\draw (3017,-3176)--(3005,-3188);
\pgfsetdash{}{+0pt}
\draw (2852,-3282)--(2852,-3223)--(2863,-3211);
\pgfsetdash{}{+0pt}
\draw (2888,-3223)--(2852,-3188);
\pgfsetdash{}{+0pt}
\pgfsetdash{}{+0pt}
\draw (2888,-3188)--(2876,-3199);
\pgfsetdash{}{+0pt}
\draw (629,-3087) arc[start angle=+167.6, end angle=+-167.6, radius=+538];
\draw (278,-3087) arc[start angle=+172.49, end angle=+-172.49, radius=+883.6];
\draw (895,-3151) arc[start angle=+110, end angle=+-110, radius=+53.2];
\filldraw  (1154,-3203) ellipse [x radius=+18,y radius=+17];
\draw (225,-3087) rectangle (1049,-3318);
\pgfsetlinewidth{+30\XFigu}
\pgfsetdash{{+15\XFigu}{+90\XFigu}}{+15\XFigu}
\draw (313,-3336)--(610,-3336);
\draw (313,-3352)--(610,-3352);
\draw (313,-3369)--(610,-3369);
\draw (313,-3035)--(610,-3035);
\draw (313,-3053)--(610,-3053);
\draw (313,-3069)--(610,-3069);
\pgfsetlinewidth{+7.5\XFigu}
\pgfsetdash{{+60\XFigu}{+60\XFigu}}{++0pt}
\draw (263,-3114) rectangle (895,-3296);
\pgfsetdash{}{+0pt}
\draw (895,-3151)--(813,-3151);
\draw (895,-3251)--(813,-3251);
\draw (813,-3251)--(696,-3251)--(660,-3218);
\draw (813,-3151)--(696,-3151)--(684,-3163);
\draw (696,-3185)--(660,-3151);
\draw (637,-3185)--(660,-3185)--(672,-3173);
\draw (696,-3218)--(684,-3229);
\draw (637,-3218)--(660,-3218);
\draw (672,-3240)--(660,-3251);
\draw (1742,-1120) arc[start angle=+166.9, end angle=+-166.9, radius=+540];
\draw (1391,-1120) arc[start angle=+172.07, end angle=+-172.07, radius=+884.5];
\draw (2009,-1188) arc[start angle=+106, end angle=+-106, radius=+55.1];
\filldraw  (2268,-1243) circle [radius=+18];
\draw (1339,-1120) rectangle (2162,-1364);
\pgfsetlinewidth{+30\XFigu}
\pgfsetdash{{+15\XFigu}{+90\XFigu}}{+15\XFigu}
\draw (1427,-1381)--(1724,-1381);
\draw (1427,-1399)--(1724,-1399);
\draw (1427,-1417)--(1724,-1417);
\draw (1427,-1068)--(1724,-1068);
\draw (1427,-1085)--(1724,-1085);
\draw (1427,-1103)--(1724,-1103);
\pgfsetlinewidth{+7.5\XFigu}
\pgfsetdash{{+60\XFigu}{+60\XFigu}}{++0pt}
\draw (1376,-1150) rectangle (2009,-1340);
\pgfsetdash{}{+0pt}
\draw (2009,-1188)--(1927,-1188);
\draw (2009,-1294)--(1927,-1294);
\draw (1832,-1294)--(1809,-1294)--(1773,-1259);
\draw (1832,-1188)--(1809,-1188)--(1797,-1200);
\draw (1809,-1223)--(1773,-1188);
\draw (1750,-1223)--(1773,-1223)--(1786,-1212);
\draw (1809,-1259)--(1797,-1270);
\draw (1750,-1259)--(1773,-1259);
\draw (1786,-1282)--(1773,-1294);
\pgfsetdash{{+15\XFigu}{+45\XFigu}}{+15\XFigu}
\draw (1832,-1294)--(1939,-1294);
\draw (1832,-1188)--(1927,-1188);
\pgftext[base,left,at=\pgfqpointxy{3020}{-3185}]
{\fontsize{5}{6}\usefont{T1}{ptm}{m}{n}$B$};
\pgftext[base,left,at=\pgfqpointxy{2750}{-3211}]
{\fontsize{5}{6}\usefont{T1}{ptm}{m}{n}$A$};
\pgftext[base,left,at=\pgfqpointxy{500}{-3280}]
{\fontsize{5}{6}\usefont{T1}{ptm}{m}{n}$B$};
\pgftext[base,left,at=\pgfqpointxy{500}{-3190}]
{\fontsize{5}{6}\usefont{T1}{ptm}{m}{n}$A$};
\pgftext[base,left,at=\pgfqpointxy{1650}{-1320}]
{\fontsize{5}{4.8}\usefont{T1}{ptm}{m}{n}$B$};
\pgftext[base,left,at=\pgfqpointxy{1650}{-1220}]
{\fontsize{5}{4.8}\usefont{T1}{ptm}{m}{n}$A$};
\endtikzpicture%

\end{center}
  \caption{The diagram when other crossings are present. The situation in which
the crossings adjacent to the arc obtained after untwisting the maximal string
of half-twists are not the same is pictured in the middle, while the case where
they coincide is shown on the bottom.}
 \label{fig:withcrossings}
\end{figure}

We are now left to understand why the original diagram is also of the desired
form. In principle, the sequence of crossing removed in the process could be
inserted in two distinct ways (see Figure~\ref{fig:withcrossings} bottom). It 
is thus enough to show that the two situations are isotopic via an isotopy 
that preserves the dot, i.e. the projection of the axis of rotation. This is 
clear from the figure and the discussion preceeding the proof.
\end{proof}

We remark that in the proof of the proposition, one might want to change the
diagram by an isotopy so that another nugatory crossing adjacent to the loop
encircling the fixed-point of $Fix(\psi)/\langle \psi \rangle$ at infinity 
appears. However, for the knot to be trivial both the innermost and outermost 
loops must be adjacent to a nugatory crossing.


\section{Symmetry of the quotients}\label{s:symmetry}

In this section we study the link having a diagram of the form determined in
the previous section. 

\begin{Proposition}\label{p:symdiag}
Let $K$ be a non trivial prime alternating knot with period $\psi$ of oder $n$.
Assume that $\psi$ is visible on a minimal diagram $D$ for $K$ and that
$K/\langle \psi \rangle$ is the trivial knot. Then there is homeomorphism of 
the $3$-sphere that exchanges the two components of the link 
$(K\cup Fix(\psi))/\langle \psi \rangle$.
\end{Proposition}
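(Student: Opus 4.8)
The plan is to determine the link $L:=(K\cup Fix(\psi))/\langle\psi\rangle$ explicitly from the diagram $D/\langle\psi\rangle$ produced in Proposition~\ref{p:quodiag}, and then to recognise it as a link whose symmetry group manifestly contains an element exchanging the two components.

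\emph{First step: a topological reading of the diagram.} The component $\bar A:=Fix(\psi)/\langle\psi\rangle$ is an unknot, so $V:=\S^3\setminus\mathrm{int}\,N(\bar A)$ is a solid torus, and the other component $\bar K:=K/\langle\psi\rangle$ is a knot lying in $V$ that is trivial in $\S^3$. What Proposition~\ref{p:quodiag} provides is precisely the way $\bar K$ sits inside $V$: it is the closure, by the arcs running around the dot $\bar A$, of the alternating chain of twist boxes of Figure~\ref{fig:quodiag}, and this is exactly the continued-fraction description of a \emph{rational curve in the solid torus $V$}, whose invariants are the numbers of crossings in the boxes together with the signs forced by the diagram being alternating. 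In particular $(V,\bar K)$ is a rational pair, and I would record that the inductive reduction of the size $k$ in the proof of Proposition~\ref{p:quodiag}, which strips a maximal twist region by Reidemeister~I moves around the dot, is exactly the normalisation of this continued fraction; along the way it also yields $\gcd(w,n)=1$, where $w=\mathrm{lk}(\bar K,\bar A)$, in agreement with the connectedness of $K$.

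\emph{Second step: identifying $L$.} Here the hypothesis that $\bar K$ is the \emph{trivial} knot is decisive. Being a component of a rational pair $\bar K$ is unknotted in $V$'s world anyway, but because it appears as the closure of the whole chain of boxes, its triviality in $\S^3$ forces the associated continued fraction to have numerator $\pm1$; equivalently, $\bar K$ is isotopic \emph{inside $V$} either to the core of $V$ or to a $(w,\pm1)$-curve on $\partial V$. In the first case $L$ is the Hopf link; in the second, pushing $\bar K$ onto $\partial V$ and recalling that $\bar A$ is a core of the complementary solid torus identifies $L$ with the $(2,2w)$-torus link. In either case $L$ lies on a Heegaard torus of $\S^3$ as a pair of fibres with $\bar A$ a core on one side, and the self-homeomorphism of $\S^3$ that exchanges the two sides of this Heegaard torus carries $\bar A$ to $\bar K$ and $\bar K$ to $\bar A$; this is the required homeomorphism. (The remark after the proof of Proposition~\ref{p:quodiag}, that for $K$ to be trivial both the innermost and the outermost loop of the quotient diagram must be adjacent to a nugatory crossing, is the diagrammatic shadow of this inside-out symmetry and can be used to make the identification concrete.)

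\emph{Main obstacle.} The delicate point is the second step: one must convert the purely pictorial content of Figure~\ref{fig:quodiag} — the size $k$, the crossing counts in the boxes, the distinction between the innermost and the outermost loop, and the two a priori inequivalent ways of reinserting a stripped twist region noted at the end of the proof of Proposition~\ref{p:quodiag} — into the rigorous assertion that $(V,\bar K)$ is a rational pair with numerator $\pm1$, and to verify that $w$ is coprime to $n$. One should also check that the chain of boxes really closes up to a rational curve in $V$ and not, say, to a satellite configuration carrying an extra essential torus around the axis; I expect this to follow from the fact that in Figure~\ref{fig:quodiag} the boxes form a single alternating chain rather than being attached cyclically around the dot.
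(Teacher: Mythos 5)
The decisive problem is your second step. Triviality of $\bar K$ in $\S^3$ does \emph{not} force $\bar K$ to be isotopic inside $V=\S^3\setminus\mathrm{int}\,N(\bar A)$ to the core of $V$ or to a curve on $\partial V$: a knot in a solid torus can be unknotted in $\S^3$ while sitting in $V$ as a complicated pattern (Whitehead-- or Mazur--type patterns are the standard examples), and nothing in the diagram of Figure~\ref{fig:quodiag} excludes this --- in fact this is exactly what happens in the cases the proposition is needed for. If your identification of $L=(K\cup Fix(\psi))/\langle\psi\rangle$ with the Hopf link or a $(2,2w)$-torus link were correct, then $\S^3\setminus L$ would be Seifert fibred; since $\S^3\setminus(K\cup Fix(\psi))$ is a free $n$-fold cyclic cover of $\S^3\setminus L$, the link $K\cup Fix(\psi)$ would be a Seifert link and $K$ a torus knot. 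But in the proof of Theorem~\ref{t:main} the proposition is applied precisely to hyperbolic alternating knots. A concrete test case: the weaving knots, i.e.\ the closures of $(\sigma_1\sigma_2^{-1})^m$, are alternating, hyperbolic for $m\ge 4$ (e.g.\ $8_{18}$), carry a visible $m$-period with trivial quotient, and their quotient link is the closure of $\sigma_1\sigma_2^{-1}$ together with the braid axis --- a two-component link with both components unknotted whose complement fibres over the circle with pseudo-Anosov monodromy, hence is hyperbolic and in particular not a torus or Hopf link. So the claim that $L$ lies on a Heegaard torus as a pair of fibres fails, and with it the ``obvious'' exchange homeomorphism. (Your first step is also only heuristic: the closure by $k$ arcs around the dot gives a pattern of winding number $k$ in $V$, and the triviality of $\bar K$ in $\S^3$ does not translate into a ``numerator $\pm1$'' condition on any continued fraction; the worry you raise at the end about satellite configurations is exactly the point, and it cannot be dismissed.)

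For comparison, the paper never identifies the link type of $L$ at all, and in general $L$ is a hyperbolic link whose exchangeability is not visible from any torus. Instead, starting from the diagram of Proposition~\ref{p:quodiag} redrawn as in Figure~\ref{fig:linksym}, it transfers the twist boxes of the $K/\langle\psi\rangle$ component one at a time onto the axis component by flypes (Figure~\ref{fig:pushing}); when every box except possibly the first contains an even number of crossings, the resulting diagram is the original one with the roles of the two components interchanged, up to a rotation by $\pi$ about a vertical axis in the projection plane, and Figure~\ref{fig:crosschange} shows how to modify the diagram beforehand so that this parity condition can always be arranged. Any repair of your argument would have to replace the torus-link identification by an argument of this kind, which actually uses the specific chain-of-boxes structure of the quotient diagram rather than only the unknottedness of the two components.
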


\begin{proof}
By hypothesis, Proposition~\ref{p:quodiag} applies and we know that 
$(K\cup Fix(\psi))/\langle \psi \rangle$ admits a diagram as in
Figure~\ref{fig:quodiag} where the second component is represented by the
central dot. The second component of the link is a trivial knot that encircles
the $k$ arcs that close up the tangle. This is pictured in the left-hand side
of Figure~\ref{fig:linksym}, where an isotopy was performed so that the
sequences of half-twists appear alternately on the right and left.

\begin{figure}[ht]
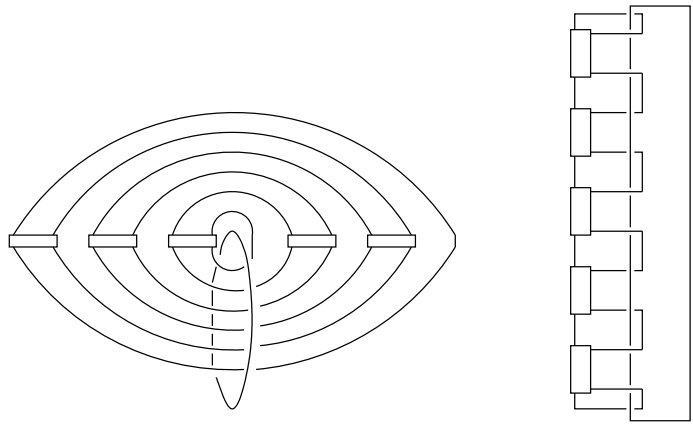

 \begin{center}



\ifx\XFigwidth\undefined\dimen1=0pt\else\dimen1\XFigwidth\fi
\divide\dimen1 by 4054
\ifx\XFigheight\undefined\dimen3=0pt\else\dimen3\XFigheight\fi
\divide\dimen3 by 2499
\ifdim\dimen1=0pt\ifdim\dimen3=0pt\dimen1=4143sp\dimen3\dimen1
  \else\dimen1\dimen3\fi\else\ifdim\dimen3=0pt\dimen3\dimen1\fi\fi
\tikzpicture[x=+\dimen1, y=+\dimen3]
{\ifx\XFigu\undefined\catcode`\@11
\def\temp{\alloc@1\dimen\dimendef\insc@unt}\temp\XFigu\catcode`\@12\fi}
\XFigu4143sp
\ifdim\XFigu<0pt\XFigu-\XFigu\fi
\clip(168,-2622) rectangle (4222,-123);
\tikzset{inner sep=+0pt, outer sep=+0pt}
\pgfsetlinewidth{+7.5\XFigu}
\draw (439,-1502) arc[start angle=+150.06, end angle=+29.94, radius=+1223.8];
\draw (675,-1502) arc[start angle=+151.93, end angle=+28.07, radius=+935];
\draw (911,-1502) arc[start angle=+155.24, end angle=+24.76, radius=+648.6];
\draw (1146,-1502) arc[start angle=+162.5, end angle=+17.5, radius=+370.7];
\draw (1382,-1502) arc[start angle=+190.1, end angle=+-10.1, radius=+119.9];
\draw (204,-1502) arc[start angle=+148.40, end angle=+31.60, radius=+1535.7];
\draw (911,-1573) arc[start angle=+-155.53, end angle=+-82.26, radius=+653.7];
\draw (1665,-1926) arc[start angle=+-77.01, end angle=+-23.43, radius=+612.1];
\draw (675,-1573) arc[start angle=+-151.43, end angle=+-86.46, radius=+952.1];
\draw (1665,-2044) arc[start angle=+-81.74, end angle=+-27.23, radius=+885.3];
\draw (439,-1573) arc[start angle=+-149.85, end angle=+-87.07, radius=+1235];
\draw (1665,-2162) arc[start angle=+-84.24, end angle=+-29.06, radius=+1156.6];
\draw (204,-1573) arc[start angle=+-148.93, end angle=+-87.38, radius=+1514];
\draw (1641,-2304) arc[start angle=+-84.67, end angle=+-31.74, radius=+1556.4];
\draw (1382,-1573) arc[start angle=+169.9, end angle=+305.9, radius=+119.7];
\draw (1641,-1809) arc[start angle=+-71.7, end angle=+-12.1, radius=+319];
\draw (1146,-1573) arc[start angle=+-159.4, end angle=+-83.4, radius=+403.3];
\draw (180,-1502) rectangle (463,-1573);
\draw (651,-1502) rectangle (934,-1573);
\draw (1123,-1502) rectangle (1405,-1573);
\draw (1830,-1502) rectangle (2113,-1573);
\draw (2301,-1502) rectangle (2584,-1573);
\draw (1618,-1502)--(1618,-1644);
\draw (3503,-276) rectangle (3621,-559);
\draw (3503,-748) rectangle (3621,-1031);
\draw (3503,-1691) rectangle (3621,-1974);
\draw (3503,-1219) rectangle (3621,-1502);
\draw (3503,-2162) rectangle (3621,-2445);
\draw (3527,-559)--(3527,-748);
\draw (3527,-1031)--(3527,-1219);
\draw (3527,-1502)--(3527,-1691);
\draw (3527,-1974)--(3527,-2162);
\draw (3621,-300)--(3927,-300);
\draw (3621,-536)--(3927,-536);
\draw (3621,-1243)--(3927,-1243);
\draw (3621,-1479)--(3927,-1479);
\draw (3621,-2186)--(3927,-2186);
\draw (3621,-2421)--(3927,-2421);
\draw (3621,-771)--(3833,-771);
\draw (3621,-1007)--(3833,-1007);
\draw (3621,-1714)--(3833,-1714);
\draw (3621,-1950)--(3833,-1950);
\draw (3527,-276)--(3527,-182)--(3833,-182);
\draw (3527,-2445)--(3527,-2539)--(3833,-2539);
\draw (3856,-324)--(3856,-512);
\draw (3856,-1266)--(3856,-1455);
\draw (3856,-2209)--(3856,-2398);
\draw (3856,-559)--(3856,-1219);
\draw (3856,-1502)--(3856,-2162);
\draw (3880,-182)--(3927,-182)--(3927,-300);
\draw (3927,-2421)--(3927,-2539)--(3880,-2539);
\draw (3880,-1950)--(3927,-1950)--(3927,-2186);
\draw (3880,-1007)--(3927,-1007)--(3927,-1243);
\draw (3927,-536)--(3927,-771)--(3880,-771);
\draw (3927,-1479)--(3927,-1714)--(3880,-1714);
\draw (3856,-276)--(3856,-135)--(4210,-135)--(4210,-2610)--(3856,-2610)--(3856,-2445);
\draw (2820,-1502)--(2820,-1573);
\draw (1429,-1620)--(1429,-1619)--(1429,-1615)--(1430,-1607)--(1431,-1597)--(1433,-1587)
  --(1435,-1577)--(1437,-1567)--(1441,-1557)--(1445,-1546)--(1451,-1533)--(1458,-1519)
  --(1466,-1506)--(1475,-1495)--(1483,-1486)--(1492,-1480)--(1500,-1478)--(1506,-1480)
  --(1513,-1483)--(1520,-1489)--(1527,-1497)--(1534,-1507)--(1541,-1519)--(1547,-1530)
  --(1553,-1542)--(1558,-1554)--(1562,-1565)--(1566,-1576)--(1570,-1587)--(1574,-1598)
  --(1578,-1610)--(1581,-1622)--(1584,-1635)--(1587,-1648)--(1589,-1662)--(1592,-1676)
  --(1594,-1691)--(1596,-1702)--(1597,-1714)--(1599,-1727)--(1600,-1741)--(1602,-1756)
  --(1604,-1771)--(1605,-1788)--(1607,-1804)--(1608,-1821)--(1610,-1837)--(1611,-1854)
  --(1612,-1869)--(1613,-1884)--(1614,-1899)--(1615,-1914)--(1615,-1929)--(1616,-1944)
  --(1616,-1959)--(1617,-1975)--(1617,-1992)--(1617,-2008)--(1617,-2024)--(1617,-2040)
  --(1616,-2055)--(1616,-2070)--(1615,-2084)--(1615,-2098)--(1614,-2111)--(1613,-2126)
  --(1612,-2142)--(1610,-2158)--(1609,-2175)--(1607,-2192)--(1605,-2209)--(1603,-2226)
  --(1600,-2243)--(1598,-2259)--(1595,-2274)--(1593,-2289)--(1590,-2304)--(1587,-2318)
  --(1584,-2333)--(1581,-2348)--(1578,-2364)--(1574,-2380)--(1570,-2395)--(1566,-2411)
  --(1562,-2425)--(1558,-2439)--(1554,-2451)--(1551,-2462)--(1547,-2473)--(1543,-2484)
  --(1538,-2495)--(1533,-2505)--(1529,-2514)--(1523,-2522)--(1518,-2528)--(1514,-2533)
  --(1509,-2537)--(1504,-2539)--(1500,-2539)--(1496,-2539)--(1491,-2537)--(1486,-2533)
  --(1482,-2529)--(1476,-2523)--(1471,-2515)--(1466,-2507)--(1462,-2499)--(1457,-2490)
  --(1453,-2480)--(1448,-2470)--(1444,-2459)--(1439,-2445)--(1433,-2429)--(1426,-2410)
  --(1419,-2390)--(1412,-2372)--(1408,-2358)--(1405,-2352)--(1405,-2351);
\draw (1405,-1691)--(1382,-1785);
\draw (1382,-1832)--(1382,-1926);
\draw (1382,-1974)--(1382,-2044);
\draw (1382,-2091)--(1382,-2162);
\draw (1382,-2209)--(1382,-2280);
\endtikzpicture%

\end{center}
  \caption{Two diagrams of the link $(K\cup Fix(\psi))/\langle \psi \rangle$ as
seen from above and from the side, respectively.}
 \label{fig:linksym}
\end{figure}

To prove the assertion it is suitable to modify the given diagram as in
Figure~\ref{fig:linksym} right. One can think of the two diagrams as
projections of the link from above and from the side respectively. To visualise 
how to pass from the diagram pictured on the left-hand side to the one on the 
right-hand side it is convenient to imagine that the central part of the 
diagram on the left is at the top and the link is located lower and lower down 
the further we move away from the centre. The sequences of half-twists (i.e. 
the boxes) on the right are moved to the left by making them pass behind the 
second component of the link. All sequences of half-twists are now arranged 
vertically, rather than horizontally. 

To prove the proposition it is then enough to show that one can
transfer the crossings of the $K/\langle \psi \rangle$ component onto the 
$Fix(\psi)/\langle \psi \rangle$ component so that afterwards the second
component looks like the first one used to. This can be done as explained in
Figure~\ref{fig:pushing}. 

\begin{figure}[ht]
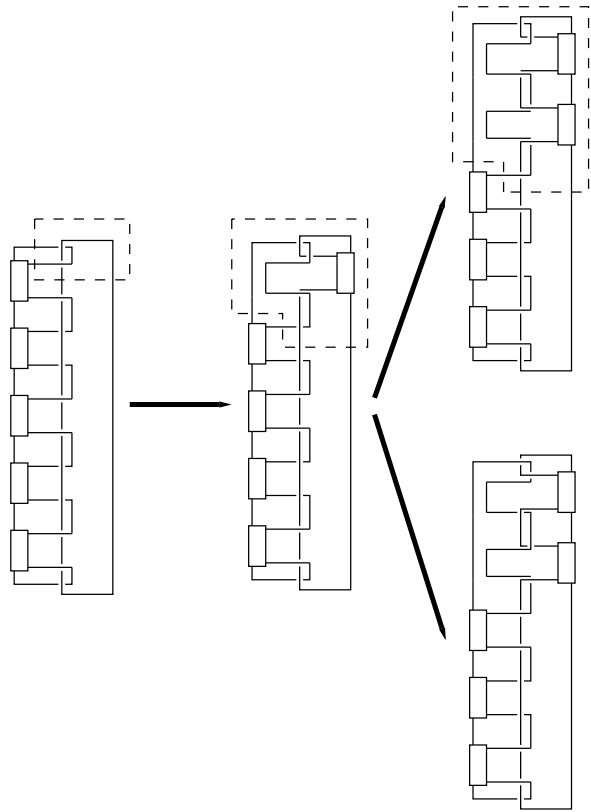

 \begin{center}



\ifx\XFigwidth\undefined\dimen1=0pt\else\dimen1\XFigwidth\fi
\divide\dimen1 by 3444
\ifx\XFigheight\undefined\dimen3=0pt\else\dimen3\XFigheight\fi
\divide\dimen3 by 4812
\ifdim\dimen1=0pt\ifdim\dimen3=0pt\dimen1=4143sp\dimen3\dimen1
  \else\dimen1\dimen3\fi\else\ifdim\dimen3=0pt\dimen3\dimen1\fi\fi
\tikzpicture[x=+\dimen1, y=+\dimen3]
{\ifx\XFigu\undefined\catcode`\@11
\def\temp{\alloc@1\dimen\dimendef\insc@unt}\temp\XFigu\catcode`\@12\fi}
\XFigu4143sp
\ifdim\XFigu<0pt\XFigu-\XFigu\fi
\pgfdeclarearrow{
  name = xfiga1,
  parameters = {
    \the\pgfarrowlinewidth \the\pgfarrowlength \the\pgfarrowwidth\ifpgfarrowopen o\fi},
  defaults = {
	  line width=+7.5\XFigu, length=+120\XFigu, width=+60\XFigu},
  setup code = {
    \dimen7 2.1\pgfarrowlength\pgfmathveclen{\the\dimen7}{\the\pgfarrowwidth}
    \dimen7 2\pgfarrowwidth\pgfmathdivide{\pgfmathresult}{\the\dimen7}
    \dimen7 \pgfmathresult\pgfarrowlinewidth
    \pgfarrowssettipend{+\dimen7}
    \pgfarrowssetbackend{+-\pgfarrowlength}
    \dimen9 -\pgfarrowlength\advance\dimen9 by-0.45\pgfarrowlinewidth
    \pgfarrowssetlineend{+\dimen9}
    \dimen9 -\pgfarrowlength\advance\dimen9 by-0.5\pgfarrowlinewidth
    \pgfarrowssetvisualbackend{+\dimen9}
    \pgfarrowshullpoint{+\dimen7}{+0pt}
    \pgfarrowsupperhullpoint{+-\pgfarrowlength}{+0.5\pgfarrowwidth}
    \pgfarrowssavethe\pgfarrowlinewidth
    \pgfarrowssavethe\pgfarrowlength
    \pgfarrowssavethe\pgfarrowwidth
  },
  drawing code = {\pgfsetdash{}{+0pt}
    \ifdim\pgfarrowlinewidth=\pgflinewidth\else\pgfsetlinewidth{+\pgfarrowlinewidth}\fi
    \pgfpathmoveto{\pgfqpoint{-\pgfarrowlength}{-0.5\pgfarrowwidth}}
    \pgfpathlineto{\pgfqpoint{0pt}{0pt}}
    \pgfpathlineto{\pgfqpoint{-\pgfarrowlength}{0.5\pgfarrowwidth}}
    \pgfpathclose
    \ifpgfarrowopen\pgfusepathqstroke\else\pgfsetfillcolor{.}
	\ifdim\pgfarrowlinewidth>0pt\pgfusepathqfillstroke\else\pgfusepathqfill\fi\fi
  }
}
\clip(168,-4935) rectangle (3612,-123);
\tikzset{inner sep=+0pt, outer sep=+0pt}
\pgfsetlinewidth{+7.5\XFigu}
\draw (1588,-2026) rectangle (1689,-2267);
\draw (1588,-2831) rectangle (1689,-3072);
\draw (1588,-2428) rectangle (1689,-2670);
\draw (1588,-3233) rectangle (1689,-3475);
\draw (1608,-1865)--(1608,-2026);
\draw (1608,-2267)--(1608,-2428);
\draw (1608,-2670)--(1608,-2831);
\draw (1608,-3072)--(1608,-3233);
\draw (1689,-1664)--(1950,-1664);
\draw (1689,-1845)--(1950,-1845);
\draw (1689,-2449)--(1950,-2449);
\draw (1689,-2650)--(1950,-2650);
\draw (1689,-3253)--(1950,-3253);
\draw (1689,-3454)--(1950,-3454);
\draw (1689,-2046)--(1870,-2046);
\draw (1689,-2247)--(1870,-2247);
\draw (1689,-2851)--(1870,-2851);
\draw (1689,-3052)--(1870,-3052);
\draw (1608,-3475)--(1608,-3555)--(1870,-3555);
\draw (1890,-2469)--(1890,-2630);
\draw (1890,-3273)--(1890,-3434);
\draw (1890,-1865)--(1890,-2428);
\draw (1890,-2670)--(1890,-3233);
\draw (1910,-1543)--(1950,-1543)--(1950,-1664);
\draw (1950,-3454)--(1950,-3555)--(1910,-3555);
\draw (1910,-3052)--(1950,-3052)--(1950,-3253);
\draw (1910,-2247)--(1950,-2247)--(1950,-2449);
\draw (1950,-1845)--(1950,-2046)--(1910,-2046);
\draw (1950,-2650)--(1950,-2851)--(1910,-2851);
\draw (1608,-1865)--(1608,-1543)--(1870,-1543);
\pgfsetdash{}{+0pt}
\draw (1890,-1624)--(1890,-1503)--(2192,-1503)--(2192,-1604);
\pgfsetdash{}{+0pt}
\draw (2111,-1604) rectangle (2212,-1845);
\draw (2192,-1845)--(2192,-3615)--(1890,-3615)--(1890,-3475);
\pgfsetdash{}{+0pt}
\draw (1689,-1664)--(1689,-1845);
\draw (2111,-1624)--(1970,-1624);
\draw (1890,-1624)--(1930,-1624);
\draw (2111,-1825)--(1890,-1825);
\pgfsetdash{}{+0pt}
\draw (2896,-1523) rectangle (2996,-1765);
\draw (2896,-1121) rectangle (2996,-1362);
\draw (2896,-1925) rectangle (2996,-2167);
\draw (2916,-557)--(2916,-940);
\draw (2916,-940)--(2916,-1121);
\draw (2916,-1362)--(2916,-1523);
\draw (2916,-1765)--(2916,-1925);
\draw (2996,-356)--(3258,-356);
\draw (2996,-537)--(3258,-537);
\draw (2996,-1141)--(3258,-1141);
\draw (2996,-1342)--(3258,-1342);
\draw (2996,-1946)--(3258,-1946);
\draw (2996,-2147)--(3258,-2147);
\draw (2996,-759)--(3258,-759);
\draw (2996,-920)--(3258,-920);
\draw (2996,-1543)--(3178,-1543);
\draw (2996,-1744)--(3178,-1744);
\draw (2916,-2167)--(2916,-2247)--(3178,-2247);
\draw (3198,-1161)--(3198,-1322);
\draw (3198,-1966)--(3198,-2127);
\draw (3198,-1362)--(3198,-1925);
\draw (3218,-236)--(3258,-236)--(3258,-356);
\draw (3258,-2147)--(3258,-2247)--(3218,-2247);
\draw (3218,-1744)--(3258,-1744)--(3258,-1946);
\draw (3258,-1342)--(3258,-1543)--(3218,-1543);
\draw (2916,-557)--(2916,-236)--(3178,-236);
\pgfsetdash{}{+0pt}
\draw (3198,-316)--(3198,-195)--(3499,-195)--(3499,-296);
\pgfsetdash{}{+0pt}
\draw (3419,-296) rectangle (3520,-537);
\draw (3499,-960)--(3499,-2308)--(3198,-2308)--(3198,-2167);
\pgfsetdash{}{+0pt}
\draw (2996,-356)--(2996,-537);
\draw (3419,-316)--(3278,-316);
\draw (3198,-316)--(3238,-316);
\draw (3419,-517)--(3198,-517);
\pgfsetdash{}{+0pt}
\draw (2996,-759)--(2996,-920);
\draw (3419,-718) rectangle (3520,-960);
\pgfsetdash{}{+0pt}
\draw (3499,-537)--(3499,-718);
\draw (3419,-739)--(3198,-739);
\draw (3419,-940)--(3198,-940);
\pgfsetdash{}{+0pt}
\draw (3258,-960)--(3258,-1141);
\pgfsetdash{}{+0pt}
\draw (3198,-940)--(3198,-1121);
\pgfsetdash{}{+0pt}
\draw (3258,-537)--(3258,-718);
\pgfsetdash{}{+0pt}
\draw (3198,-557)--(3198,-739);
\pgfsetdash{}{+0pt}
\draw (180,-1651) rectangle (281,-1892);
\draw (180,-2053) rectangle (281,-2294);
\draw (180,-2858) rectangle (281,-3099);
\draw (180,-2455) rectangle (281,-2697);
\draw (180,-3260) rectangle (281,-3502);
\draw (200,-1892)--(200,-2053);
\draw (200,-2294)--(200,-2455);
\draw (200,-2697)--(200,-2858);
\draw (200,-3099)--(200,-3260);
\draw (281,-1671)--(542,-1671);
\draw (281,-1872)--(542,-1872);
\draw (281,-2476)--(542,-2476);
\draw (281,-2677)--(542,-2677);
\draw (281,-3280)--(542,-3280);
\draw (281,-3481)--(542,-3481);
\draw (281,-2073)--(462,-2073);
\draw (281,-2274)--(462,-2274);
\draw (281,-2878)--(462,-2878);
\draw (281,-3079)--(462,-3079);
\draw (200,-1651)--(200,-1570)--(462,-1570);
\draw (200,-3502)--(200,-3582)--(462,-3582);
\draw (482,-1691)--(482,-1852);
\draw (482,-2496)--(482,-2657);
\draw (482,-3300)--(482,-3461);
\draw (482,-1892)--(482,-2455);
\draw (482,-2697)--(482,-3260);
\draw (502,-1570)--(542,-1570)--(542,-1671);
\draw (542,-3481)--(542,-3582)--(502,-3582);
\draw (502,-3079)--(542,-3079)--(542,-3280);
\draw (502,-2274)--(542,-2274)--(542,-2476);
\draw (542,-1872)--(542,-2073)--(502,-2073);
\draw (542,-2677)--(542,-2878)--(502,-2878);
\draw (482,-1651)--(482,-1530)--(784,-1530)--(784,-3642)--(482,-3642)--(482,-3502);
\pgfsetdash{{+60\XFigu}{+60\XFigu}}{++0pt}
\draw (321,-1402)--(321,-1704)--(321,-1765)--(381,-1765)--(884,-1765)--(884,-1402)--cycle;
\draw (1488,-1402)--(1488,-1966)--(1789,-1966)--(1789,-2167)--(2292,-2167)--(2292,-1402)--cycle;
\draw (3097,-1241)--(3097,-1060)--(2795,-1060)--(2795,-135)--(3600,-135)--(3600,-1241)--cycle;
\pgfsetdash{}{+0pt}
\draw (2896,-4138) rectangle (2996,-4380);
\draw (2896,-3736) rectangle (2996,-3977);
\draw (2896,-4541) rectangle (2996,-4782);
\draw (2916,-3173)--(2916,-3555);
\draw (2916,-3555)--(2916,-3736);
\draw (2916,-3977)--(2916,-4138);
\draw (2916,-4380)--(2916,-4541);
\draw (2996,-3756)--(3258,-3756);
\draw (2996,-3957)--(3258,-3957);
\draw (2996,-4561)--(3258,-4561);
\draw (2996,-4762)--(3258,-4762);
\draw (2996,-3535)--(3258,-3535);
\draw (2996,-4159)--(3178,-4159);
\draw (2996,-4360)--(3178,-4360);
\draw (2916,-4782)--(2916,-4863)--(3178,-4863);
\draw (3198,-3776)--(3198,-3937);
\draw (3198,-4581)--(3198,-4742);
\draw (3198,-3977)--(3198,-4541);
\draw (3258,-4762)--(3258,-4863)--(3218,-4863);
\draw (3218,-4360)--(3258,-4360)--(3258,-4561);
\draw (3258,-3957)--(3258,-4159)--(3218,-4159);
\draw (3419,-2911) rectangle (3520,-3153);
\draw (3499,-3575)--(3499,-4923)--(3198,-4923)--(3198,-4782);
\pgfsetdash{}{+0pt}
\draw (2996,-2972)--(2996,-3153);
\pgfsetdash{}{+0pt}
\draw (2996,-3374)--(2996,-3535);
\draw (3419,-3334) rectangle (3520,-3575);
\pgfsetdash{}{+0pt}
\draw (3499,-3153)--(3499,-3334);
\draw (3419,-3354)--(3278,-3354);
\draw (3419,-3555)--(3198,-3555);
\pgfsetdash{}{+0pt}
\draw (3258,-3575)--(3258,-3756);
\pgfsetdash{}{+0pt}
\draw (3198,-3555)--(3198,-3736);
\pgfsetdash{}{+0pt}
\draw (2996,-2972)--(3258,-2972);
\draw (2916,-3173)--(2916,-2851)--(3258,-2851)--(3258,-2911);
\pgfsetdash{}{+0pt}
\draw (3198,-2831)--(3198,-2811)--(3499,-2811)--(3499,-2911);
\draw (3419,-2931)--(3198,-2931)--(3198,-2871);
\pgfsetdash{}{+0pt}
\draw (3258,-2951)--(3258,-2972);
\draw (2996,-3153)--(3178,-3153);
\draw (2996,-3374)--(3258,-3374)--(3258,-3153)--(3218,-3153);
\pgfsetdash{}{+0pt}
\draw (3419,-3133)--(3198,-3133)--(3198,-3354)--(3238,-3354);
\pgfsetlinewidth{+30\XFigu}
\pgfsetdash{}{+0pt}
\pgfsetarrows{[line width=7.5\XFigu, width=27\XFigu, length=54\XFigu]}
\pgfsetarrowsend{xfiga1}
\draw (884,-2509)--(1488,-2509);
\draw (2333,-2469)--(2755,-1262);
\draw (2333,-2569)--(2755,-3917);
\endtikzpicture%

\end{center}
  \caption{Moving crossings from the first component to the second one and the
result according to the parity of the number of crossings.}
 \label{fig:pushing}
\end{figure}

Starting from the top, we consider the first sequence of crossings that appears
and transfer the crossings on the other side by performing flypes on the tangle
in the dashed box drawn on the left-hand side of Figure~\ref{fig:pushing}. The 
result is the central diagram in Figure~\ref{fig:pushing}. We can now repeat 
the same process with the next sequence of half-twists by performing flypes 
with respect to the tangle delimited by the dashed curve in the central 
diagram. It turns out that the result depends on the parity of the number of 
flypes we perform, i.e. crossings in the box. The result in the case of an even 
number of crossings is shown on the top right-hand side of 
Figure~\ref{fig:pushing}, while the case of an odd number is given on the 
bottom. One can see that the structure is exactly the same in the case of an 
even number of crossings, but that is not the case if the number is odd. This
phenomenon occurs only from the second sequence of half-twists onward, though,
as the parity of the number of crossings in the first sequence does not matter. 

We conclude that the resulting diagram will look as the original one with the 
two components exchanged provided that the number of crossings is even in each 
box except perhaps the first one. Indeed, after moving all crossings from one
side to the other, it is enough to rotate the link of $\pi$ about a vertical
axis contained in the plane of the diagram to go back to the original diagram,
but now with the two components exchanged.

To reach the desired conclusion, we need to understand whether it is possible
to assume that the number of crossings is always even, except perhaps in the
first box at the top.

\begin{figure}[ht]
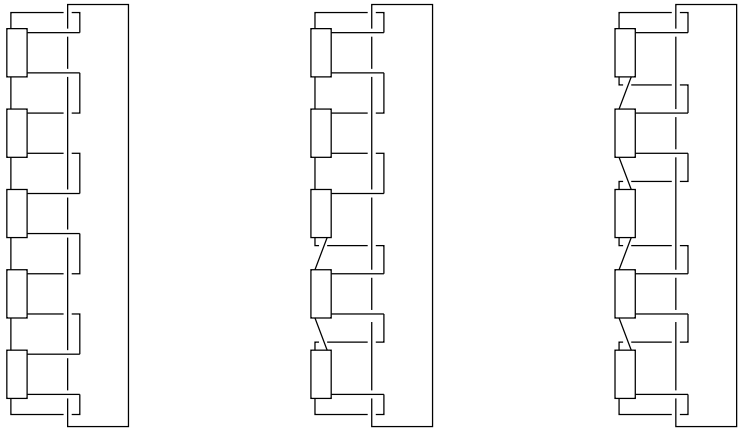

 \begin{center}



\ifx\XFigwidth\undefined\dimen1=0pt\else\dimen1\XFigwidth\fi
\divide\dimen1 by 4344
\ifx\XFigheight\undefined\dimen3=0pt\else\dimen3\XFigheight\fi
\divide\dimen3 by 2543
\ifdim\dimen1=0pt\ifdim\dimen3=0pt\dimen1=4143sp\dimen3\dimen1
  \else\dimen1\dimen3\fi\else\ifdim\dimen3=0pt\dimen3\dimen1\fi\fi
\tikzpicture[x=+\dimen1, y=+\dimen3]
{\ifx\XFigu\undefined\catcode`\@11
\def\temp{\alloc@1\dimen\dimendef\insc@unt}\temp\XFigu\catcode`\@12\fi}
\XFigu4143sp
\ifdim\XFigu<0pt\XFigu-\XFigu\fi
\clip(168,-2666) rectangle (4512,-123);
\tikzset{inner sep=+0pt, outer sep=+0pt}
\pgfsetlinewidth{+7.5\XFigu}
\draw (180,-279) rectangle (300,-567);
\draw (180,-759) rectangle (300,-1047);
\draw (180,-1718) rectangle (300,-2006);
\draw (180,-1239) rectangle (300,-1526);
\draw (180,-2198) rectangle (300,-2486);
\draw (204,-567)--(204,-759);
\draw (204,-1047)--(204,-1239);
\draw (204,-1526)--(204,-1718);
\draw (204,-2006)--(204,-2198);
\draw (300,-303)--(612,-303);
\draw (300,-543)--(612,-543);
\draw (300,-1263)--(612,-1263);
\draw (300,-1502)--(612,-1502);
\draw (300,-2222)--(612,-2222);
\draw (300,-2462)--(612,-2462);
\draw (300,-783)--(516,-783);
\draw (300,-1023)--(516,-1023);
\draw (300,-1742)--(516,-1742);
\draw (300,-1982)--(516,-1982);
\draw (204,-279)--(204,-183)--(516,-183);
\draw (204,-2486)--(204,-2582)--(516,-2582);
\draw (540,-327)--(540,-519);
\draw (540,-1287)--(540,-1478);
\draw (540,-2246)--(540,-2438);
\draw (540,-567)--(540,-1239);
\draw (540,-1526)--(540,-2198);
\draw (564,-183)--(612,-183)--(612,-303);
\draw (612,-2462)--(612,-2582)--(564,-2582);
\draw (564,-1982)--(612,-1982)--(612,-2222);
\draw (564,-1023)--(612,-1023)--(612,-1263);
\draw (612,-543)--(612,-783)--(564,-783);
\draw (612,-1502)--(612,-1742)--(564,-1742);
\draw (540,-279)--(540,-135)--(900,-135)--(900,-2654)--(540,-2654)--(540,-2486);
\draw (1980,-279) rectangle (2100,-567);
\draw (1980,-759) rectangle (2100,-1047);
\draw (1980,-1239) rectangle (2100,-1526);
\draw (1980,-2198) rectangle (2100,-2486);
\draw (2004,-567)--(2004,-759);
\draw (2004,-1047)--(2004,-1239);
\draw (2100,-303)--(2412,-303);
\draw (2100,-543)--(2412,-543);
\draw (2100,-1263)--(2412,-1263);
\draw (2100,-2462)--(2412,-2462);
\draw (2100,-783)--(2316,-783);
\draw (2100,-1023)--(2316,-1023);
\draw (2004,-279)--(2004,-183)--(2316,-183);
\draw (2004,-2486)--(2004,-2582)--(2316,-2582);
\draw (2340,-327)--(2340,-519);
\draw (2340,-1287)--(2340,-1718);
\draw (2340,-2030)--(2340,-2438);
\draw (2340,-567)--(2340,-1239);
\draw (2364,-183)--(2412,-183)--(2412,-303);
\draw (2412,-2462)--(2412,-2582)--(2364,-2582);
\draw (2364,-1023)--(2412,-1023)--(2412,-1263);
\draw (2412,-543)--(2412,-783)--(2364,-783);
\draw (2340,-279)--(2340,-135)--(2700,-135)--(2700,-2654)--(2340,-2654)--(2340,-2486);
\draw (1980,-1718) rectangle (2100,-2006);
\pgfsetdash{}{+0pt}
\draw (2076,-1526)--(2004,-1718);
\draw (2076,-2198)--(2004,-2006);
\pgfsetdash{}{+0pt}
\draw (2100,-1742)--(2412,-1742);
\draw (2100,-1982)--(2412,-1982);
\pgfsetdash{}{+0pt}
\draw (2004,-1526)--(2004,-1574)--(2028,-1574);
\draw (2004,-2198)--(2004,-2150)--(2028,-2150);
\pgfsetdash{}{+0pt}
\draw (2364,-1574)--(2412,-1574)--(2412,-1742);
\draw (2412,-1982)--(2412,-2150)--(2364,-2150);
\pgfsetdash{}{+0pt}
\draw (2076,-1574)--(2316,-1574);
\draw (2076,-2150)--(2316,-2150);
\draw (2340,-1766)--(2340,-1958);
\pgfsetdash{}{+0pt}
\draw (3780,-279) rectangle (3900,-567);
\draw (3780,-1239) rectangle (3900,-1526);
\draw (3780,-2198) rectangle (3900,-2486);
\draw (3900,-303)--(4212,-303);
\draw (3900,-2462)--(4212,-2462);
\draw (3804,-279)--(3804,-183)--(4116,-183);
\draw (3804,-2486)--(3804,-2582)--(4116,-2582);
\draw (4140,-327)--(4140,-759);
\draw (4140,-1047)--(4140,-1718);
\draw (4140,-2030)--(4140,-2438);
\draw (4164,-183)--(4212,-183)--(4212,-303);
\draw (4212,-2462)--(4212,-2582)--(4164,-2582);
\draw (4140,-279)--(4140,-135)--(4500,-135)--(4500,-2654)--(4140,-2654)--(4140,-2486);
\draw (3780,-759) rectangle (3900,-1047);
\pgfsetdash{}{+0pt}
\draw (3876,-567)--(3804,-759);
\draw (3876,-1239)--(3804,-1047);
\pgfsetdash{}{+0pt}
\draw (3900,-783)--(4212,-783);
\draw (3900,-1023)--(4212,-1023);
\pgfsetdash{}{+0pt}
\draw (3804,-567)--(3804,-615)--(3828,-615);
\draw (3804,-1239)--(3804,-1191)--(3828,-1191);
\pgfsetdash{}{+0pt}
\draw (4164,-615)--(4212,-615)--(4212,-783);
\draw (4212,-1023)--(4212,-1191)--(4164,-1191);
\pgfsetdash{}{+0pt}
\draw (3876,-615)--(4116,-615);
\draw (3876,-1191)--(4116,-1191);
\draw (4140,-807)--(4140,-999);
\pgfsetdash{}{+0pt}
\draw (3780,-1718) rectangle (3900,-2006);
\pgfsetdash{}{+0pt}
\draw (3876,-1526)--(3804,-1718);
\draw (3876,-2198)--(3804,-2006);
\pgfsetdash{}{+0pt}
\draw (3900,-1742)--(4212,-1742);
\draw (3900,-1982)--(4212,-1982);
\pgfsetdash{}{+0pt}
\draw (3804,-1526)--(3804,-1574)--(3828,-1574);
\draw (3804,-2198)--(3804,-2150)--(3828,-2150);
\pgfsetdash{}{+0pt}
\draw (4164,-1574)--(4212,-1574)--(4212,-1742);
\draw (4212,-1982)--(4212,-2150)--(4164,-2150);
\pgfsetdash{}{+0pt}
\draw (3876,-1574)--(4116,-1574);
\draw (3876,-2150)--(4116,-2150);
\draw (4140,-1766)--(4140,-1958);
\endtikzpicture%

\end{center}
  \caption{How to change the parity of the number of crossings contained in a
box.}
 \label{fig:crosschange}
\end{figure}

Figure~\ref{fig:crosschange} shows how to change the parity of the number of
crossings in the sequences, so that all boxes, except perhaps the first one,
contain an even number of crossings. Consider the diagram shown on the
right-hand side of Figure~\ref{fig:crosschange} and assume that we want to
change the parity of the number of crossings in the last box at the bottom. To
achieve that, take the next box and make it go around the arc of the second 
component by passing it either first under and then over (as in the situation
we are considering) or first over and then under (if we were to move, say, the
central box). The result is shown in the central diagram of 
Figure~\ref{fig:crosschange} where we see that the number of crossings of the
adjacent boxes has changed by one. One might now want to repeat this operation
whenever needed to change the parity of the number of crossings in the boxes 
starting from the bottom and going up. Unfortunately, this is not possible 
right away. Indeed, the central box of the central diagram is not positioned 
as the boxes of the diagram on the left-hand side, so it cannot be moved as 
explained. In fact, one remarks that in the initial diagram the boxes of the 
first component are positioned in such a way that they alternate between boxes 
lying ``above" with respect to the second component and boxes lying ``below". 
To restore the same structure as in the original diagram for all the boxes 
above those already successfully modified, it is necessary to move not only one 
box, but all those preceeding it that are an even number of boxes away from it. 
In this specific example, when we move the second box from the bottom, we have 
to move the second box from the top as well. The result is the diagram on the
right-hand side of Figure~\ref{fig:crosschange}. On the other hand, if we had
to move the central box, we would need to move the first box at the top as
well. The effect of this will be that the positions of all the boxes above the 
one we are dealing with will be exchanged: those which used to lie above with
respect to the second component will end up below, and vice versa. This way, we 
also only change the parity of the number of crossings in the box we are 
considering and possibly that of the first box at the top. 

Note that this process can change the type of crossings between the two
components, but regardless of what these are they will be preserved when 
performing an even number of flypes anyway.

This achieves the proof of the  proposition.
\end{proof}


\section{Proof of Theorem~\ref{t:main}}\label{s:proof}

We are now in a position to prove Thoerem~\ref{t:main}. 

Let $K$ be a prime alternating knot and let $n>2$ be an integer. By \cite{Me}, 
we know  that $K$ is either a torus knot or a hyperbolic knot. More precisely, 
if $K$ is a torus knot, then it must be a $(2,2m+1)$-torus knot according to
Lackenby's characterisation of tunnel number-one alternating knots in \cite{L}.

It follows from Thurson's orbifold theorem (see \cite{BLP,BMP,BPo,CHK} for a 
proof) that if $K'$ is a $n$-twin of $K$ then $K'$ is the same type of knot as 
$K$, that is a torus knot (if so is $K$), or a hyperbolic knot (if so is $K$). 
Here the fact that $n$ is at least $3$ is crucial.

The following result deals with the case of arbitrary torus knots.

\begin{Proposition}\label{p:torus}
Let $n\ge 2$. Two torus knots cannot be $n$-twins.
\end{Proposition}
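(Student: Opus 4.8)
The plan is to identify, for each torus knot $T_{p,q}$ and each $n\ge 2$, the $n$-fold cyclic branched cover $M(T_{p,q},n)$ explicitly enough to recognize which Seifert-fibered (or spherical, or Nil/$\widetilde{\mathrm{SL}_2}$) manifold it is, and then to argue that the homeomorphism type of $M(T_{p,q},n)$ together with the prime knot recovery determines the pair $(p,q)$ up to the standard ambiguities $T_{p,q}=T_{q,p}=T_{-p,-q}$, which all give the same knot. Concretely, $M(T_{p,q},n)$ is the Brieskorn manifold $\Sigma(p,q,n)$ (the link of the singularity $x^p+y^q+z^n=0$), since the $n$-fold cyclic branched cover of the $(p,q)$-torus knot, which sits on a Heegaard torus, is obtained by the corresponding branched cover of $S^3$ along that torus knot. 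These manifolds are Seifert fibered, and their Seifert invariants — in particular the multiplicities of the exceptional fibers and the Euler number — are read off from the unordered triple $\{p,q,n\}$.

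First I would recall the structure of $\Sigma(p,q,n)$: generically it is a Seifert-fibered homology sphere (when $p,q,n$ are pairwise coprime) with three exceptional fibers of orders $p$, $q$, $n$; in the non-coprime cases one gets a Seifert fibration over $S^2$ whose set of exceptional multiplicities is still governed by $\{p,q,n\}$, and the relevant invariant (the set of multiplicities, or the base orbifold Euler characteristic $1/p+1/q+1/n-1$) is a homeomorphism invariant of the total space except in the small list of non-hyperbolic-base degenerate cases. The key step is then: given that $n\ge 2$ is fixed and that $M(T_{p,q},n)\cong M(T_{p',q'},n)$, the Seifert invariants force $\{p,q,n\}=\{p',q',n\}$ as multisets, and since $n$ appears in both, $\{p,q\}=\{p',q'\}$; hence $T_{p,q}$ and $T_{p',q'}$ are the same knot. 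I would handle separately the degenerate small cases (e.g. when $\{p,q,n\}$ makes the base a bad orbifold or $S^2(2,2,m)$, giving a prism/lens/$S^2\times S^1$-type manifold, or a spherical space form $S^3/\Gamma$), checking in each such case directly by the list of small Seifert manifolds and their Seifert fibrations that the multiset $\{p,q\}$ is still recoverable; this finite check is where a bit of care is needed but no real difficulty arises because the orders $p,q\ge 2$ of a nontrivial torus knot keep us away from lens spaces and $S^3$.

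The main obstacle I anticipate is the non-uniqueness of Seifert fibrations: certain Seifert-fibered spaces admit more than one fibration (lens spaces, $S^2\times S^1$, prism manifolds, the unit tangent bundle of $S^2$), so one cannot blindly read $\{p,q,n\}$ off ``the'' fibration. Overcoming this requires noting that for $p,q\ge 2$ and $n\ge 2$ with $(p,q)=1$ the manifold $\Sigma(p,q,n)$ has infinite fundamental group except for the finitely many spherical triples $\{2,3,3\},\{2,3,4\},\{2,3,5\},\{2,2,m\}$, and in the infinite case the Seifert fibration is unique up to isotopy by the classical theorems of Waldhausen/Orlik–Vogt–Zieschang, so the invariants are canonical; the finitely many spherical and small exceptional cases are then dispatched by hand using the explicit classification of spherical space forms and their Seifert structures, in every instance recovering the unordered pair $\{p,q\}$ and hence the torus knot. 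Assembling these pieces gives that no two inequivalent torus knots can be $n$-twins for any $n\ge 2$.
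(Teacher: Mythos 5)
Your overall strategy coincides with the paper's: both identify $M(T_{p,q},n)$ as a Seifert fibred space (the Brieskorn description is equivalent to the explicit list of Núñez and Ramírez-Losada used in the paper), both invoke uniqueness of the Seifert fibration for infinite fundamental group and dispose of lens spaces and prism manifolds separately, and both then recover the knot from the Seifert invariants with $n$ fixed. However, there is a concrete error in the recovery step as you state it. It is not true that in the non-pairwise-coprime cases the fibration is over $S^2$ with exceptional multiplicities governed by $\{p,q,n\}$. Writing $d_1=\gcd(p,n)$, $d_2=\gcd(q,n)$, $d=d_1d_2$: as soon as both $d_1>1$ and $d_2>1$ the base of the fibration is an orientable surface of genus $(d_1-1)(d_2-1)/2>0$, the exceptional fibres have orders $p/d_1$, $q/d_2$ and $n/d$ (occurring with multiplicities $d_2$, $d_1$ and $1$), and in the extreme case $p\mid n$, $q\mid n$, $n=pq$ there are no exceptional fibres at all; for instance $M(T_{2,3},6)=\Sigma(2,3,6)$ is a circle bundle over the torus. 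Thus the multiset $\{p,q,n\}$ is not visible as the set of multiplicities, and the assertion that ``the Seifert invariants force $\{p,q,n\}=\{p',q',n\}$'' is unjustified exactly in these cases.

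What actually has to be done --- and this is the bulk of the paper's proof --- is to reconstruct $p$ and $q$ from the fixed $n$ together with the genus of the base, the number of exceptional fibres of each order, and those orders: e.g.\ when there are no exceptional fibres one solves $d_1d_2=n$ and $d_1+d_2=n+1-2g$, and similar bookkeeping distinguishes the cases with one, two or three distinct orders. Your plan would go through once this case analysis is supplied (your treatment of the non-uniqueness of fibrations is fine and matches the paper's), but as written the key step fails precisely when the cover is not a generic Brieskorn sphere, so the gap is real rather than cosmetic.
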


We postpone the proof of the proposition at the end of this section. At this
point we just remark that, if on the one hand the proposition suffices to 
ensure that a torus knot cannot have $n$-twins for $n>2$ because of Thurston's
orbifold theorem, on the other, there are torus knots that have $2$-twins that 
are Montesinos knots. Alternating torus knots, however, do not have twins at 
all since they are $2$-bridge knots. Indeed, Hodgson and Rubinstein proved in
\cite{HR} that $2$-bridge knots are detrmined by they $2$-fold cyclic branched 
covers. 

Let $K$ be hyperbolic and assume by contradiction that $K$ is not determined
by its $n$-fold cyclic branched cover. It follows from \cite[Theorem 3]{Z}
that $K$ admits a period $\psi$ of order $n$ such that the knot 
$K/\langle \psi \rangle$ is trivial and no orientation preserving
homeomorphism of the $3$-sphere exchanges the two components of the link 
$(K\cup Fix(\psi))/\langle \psi \rangle$. Note that \cite[Theorem 3]{Z} is 
stated only for $n$ not a power of $2$, but in fact it holds for any $n>2$ and
the proof uses basically the same argument (see, for instance, 
\cite[Chapter 4]{P1}). According to the orbifold theorem, there is a single 
case where \cite[Theorem 3]{Z} does not apply, that is when $K=4_1$ is the 
figure-eight knot and $n=3$. It is however well-known as a consequence of
Dunbar's classification of geometric orbifolds in \cite{D} that the knot 
$K=4_1$ has no $3$-twins.

Since $K$ is alternating and $\psi$ is a period of order $>2$, it was proved 
in \cite{CQ} that $\psi$ is visible on a minimal alternating diagram for $K$. 
As a consequence Proposition~\ref{p:quodiag} applies as well as 
Proposition~\ref{p:symdiag}, providing the desired contradiction.

This ends the proof of the theorem. We can now pass to the proof the result
about torus knots.

\begin{proof}[Proof of Proposition~\ref{p:torus}]
In what follows we will use the description of the $n$-fold cyclic coverings of
torus knots provided by N\'u\~nez and Ram\'{\i}rez-Losada in 
\cite[Theorem 1]{NRL}. We will restate their result in a way that is more
convenient for us. Let $a_1\ge 2$ and $a_2\ge 2$ be two coprime integers, and 
$n\ge 2$. Let $d_i$, $i=1,2$, be the greatest common divisor of $a_i$ and $n$, 
so that $d=d_1d_2$ is the greatest common divisor of $a_1a_2$ and $n$. 
The $n$-fold cyclic branched covering of the $(a_1,a_2)$-torus knot is a
Seifert fibred space with orientable base and exceptional fibres of at most
three distinct orders which are moreover pairwise coprime. According to the 
different properties of $d_1$, $d_2$, and $d$, the Seifert invariants of the 
space satisfy the following conditions.
\begin{enumerate}

\item If $d=d_1=d_2=1$, then the base of the fibration is the $2$-sphere and
there is one exceptional fibre of order $a_2$, one of order $a_1$, and one of 
order $n$.

\item If one between $d_1$ and $d_2$ is equal $1$ but the other is $>1$, we
can assume without loss of generality that $d_1>1$ and $d_2=1$ for here $a_1$ 
and $a_2$ play symmetric roles (we are not assuming $a_1<a_2$, for instance,
and of course the  $(a_1,a_2)$-torus knot is equivalent to the 
$(a_2,a_1)$-torus knot).

\begin{enumerate}
\item If $d=d_1<a_1$ and $d<n$, then the base of the fibration is the 
$2$-sphere and there are $d$ exceptional fibres of order $a_2$, one of order 
$a_1/d$, and one of order $n/d$.

\item If $d=d_1<a_1$ and $d=n$, then the base of the fibration is the 
$2$-sphere and there are $n$ exceptional fibres of order $a_2$, and one of 
order $a_1/d$.

\item If $d=d_1=a_1$ and $d<n$, then the base of the fibration is the 
$2$-sphere and there are $d$ exceptional fibres of order $a_2$, and one of 
order $n/d$.

\item If $d=d_1=a_1=n$, then the base of the fibration is the
$2$-sphere and there are $n$ exceptional fibres of order $a_2$.

\end{enumerate}

\item We can now assume that both $d_1$ and $d_2$ are $>1$.

\begin{enumerate}

\item If $d_1<a_1$, $d_2<a_2$, and $d<n$, then the base of the fibration is a
surface of genus $g=(d_1-1)(d_2-1)/2>0$ and there are $d_1$ exceptional fibres 
of order $a_2/d_2$, $d_2$ of order $a_1/d_1$, and one of order $n/d$.

\item If $d_1<a_1$, $d_2<a_2$, and $d=n$, then the base of the fibration is a
surface of genus $g=(n+1-d_1-1-d_2)/2>0$ and there are $d_1$ exceptional fibres 
of order $a_2/d_2$ and $d_2$ of order $a_1/d_1$.

\item If $d_1=a_1$, $d_2<a_2$, and $d<n$, then the base of the fibration is a
surface of genus $g=(d_1-1)(d_2-1)/2>0$ and there are $d_1$ exceptional fibres 
of order $a_2/d_2$ and one of order $n/d$.

\item If $d_1=a_1$, $d_2<a_2$, and $d=n$, then the base of the fibration is a
surface of genus $g=(n+1-d_1-1-d_2)/2>0$ and there are $d_1$ exceptional fibres
of order $a_2/d_2$.

\item If $d_1=a_1$, $d_2=a_2$, and $d<n$, then the base of the fibration is a
surface of genus $g=(d_1-1)(d_2-1)/2>0$ and there is one exceptional fibre of 
order $n/d$.

\item If $d_1=a_1$, $d_2=a_2$, and $d=n$, then the base of the fibration is a
surface of genus $g=(n+1-d_1-1-d_2)/2>0$ and there are no exceptional fibres.

\end{enumerate}

\end{enumerate}

Let $n\ge 2$ be fixed and let $M$ be the $n$-fold cyclic branched covering of
some torus knot. We claim that there is a unique torus knot $K$ such that $M$ 
is homeomorphic to $M(K,n)$. Let us assume that $M$ admits more than one
Seifert fibration. Since $M$ is a closed manifold, $M$ is either a lens space
of a prism manifold (see \cite[VI.16]{J}). If $M$ is a lens space, the base of 
a fibration is the $2$-sphere if it is orientable, and the fibration has at 
most two exceptional fibres, so the only case where lens spaces appear in the 
above list is case 2d for $n=2$. This means that $M$ must be a $2$-fold 
branched cover and $K$ a $2$-bridge knot. Since it was shown by Hodgson and 
Rubinstein \cite{HR} that $2$-bridge knots are determined by their $2$-fold 
branched covers, we can ignore this case. If $M$ is a prism manifold it has a 
single fibration with orientable base as those of our list, for the other 
fibration has the projective plane as base. We can thus assume that $M$ is not 
a lens space and has a unique fibration among those given in the above list. 

Assume first that the base of the fibration is the sphere. If
there are three different types of exceptional fibres then we are either in
situation 1 or in situation 2a. If there are exactly three exceptional fibres 
we are in situation 1 and if there are strictly more than three we are in 
situation 2a. In both cases, since $n$ is fixed, it is possible to recover the 
invariants of the torus knot from the orders of the exceptional fibres. If 
there are two different types of exceptional fibres we are in cases 2b or 2c. 
These two cases can be distinguished by the fact that in 2b the orders of the 
exceptional fibres are both coprime with $n$, while in 2c one of them is not. 
Once again, in both cases, $a_1$ and $a_2$ can be retrieved from the Seifert 
invariants. Finally, if there is only one type of exceptional fibre, we are in 
case 2d and again it is possible to reconstruct $a_1$ and $a_2$ from the number 
of exceptional fibres and their order.

Assume now that the base of the fibration is an oriantable surface of genus
$>0$. If there are three types of exceptional fibres we are in case 3a, if
there are two in cases 3b or 3c, if there is one in cases 3d or 3e, and if 
there is none we are in case 3f. In case 3c there is just one exceptional fibre 
of one of the two types while in case 3d there is more than one exceptional 
fibre of both types. A similar argument allows to distinguish cases 3d and 3e. 
What may be not completely obvious in this situation is how to recover 
$a_1=d_1$ and $a_2=d_2$ in cases 3e and 3f. Let us assume we are in case 3e. 
Since we know $n$, the order $s$ of the exceptional fibre allows to compute 
$d_1d_2=d=n/s$, while the genus allows to compute 
$d_1+d_2=d_1d_2+1-2g=n/s+1-2g$ which is enough to obtain the values of 
$a_1=d_1$ and $a_2=d_2$. Case 3f is similar, keeping in mind that now 
$d_1d_2=d=n$. This completes the proof of the proposition.
\end{proof}


\paragraph{Acknowledgements} 
The contents of this work were inspired by J. Greene's paper on double branched
covers of prime alternating knots. They heavily rely on the existence of a
periodic minimal diagram for a prime alternating knot. In her quest for a
proof of this fact, the author pestered several colleagues: M. Boileau, A.
Costa, J. Greene, C. V. Quach Hongler, M. Thistlethwaite, A. Tsvietkova, 
J. Weeks, and possibly others she may have forgotten along the way. She wishes 
to express them all her deep gratitude for their patience and comments. She is
also indebted to L. Watson for drawing her attention to the peculiarities of 
alternating diagrams of the trivial knot and B. Owen for pointing out an
imprecision in the first version of the paper.

\footnotesize

\textsc{Aix-Marseille Univ, CNRS, Centrale Marseille, I2M, UMR 7373,
13453 Marseille, France}

{luisa.paoluzzi@univ-amu.fr}

\end{document}